\newtheorem{theorem}{Theorem}[section]
\newtheorem{lemma}[theorem]{Lemma}
\newtheorem{corollary}[theorem]{Corollary}
\newtheorem{proposition}[theorem]{Proposition}
\newtheorem{definition}[theorem]{Definition}
\newtheorem{remark}[theorem]{Remark}
\newcommand{\field}[1]{\mathbb{#1}}
\newcommand{\N}{\field{N}}
\newcommand{\Q}{\field{Q}}
\newcommand{\R}{\field{R}}
\newcommand{\Cal}[1]{\mathcal{#1}}
\newcommand{\bl}{{\bf l}}
\newcommand{\bm}{{\bf m}}
\newcommand{\bo}{{\bf 0}}
\newcommand{\bp}{{\bf p}}
\newcommand{\bS}{{\bf S}}
\newcommand{\cC}{\Cal{C}}
\newcommand{\cE}{\Cal{E}}
\newcommand{\cI}{\Cal{I}}
\newcommand{\cK}{\Cal{K}}
\newcommand{\cO}{\Cal{O}}
\newcommand{\clos}{{\rm clos}}
\newcommand{\dist}{{\rm dist}}
\newcommand{\gm}{{\gamma}}
\newcommand{\Gm}{{\Gamma}}
\newcommand{\omg}{\omega}
\newcommand{\Qbar}{\overline{\Q}}
\newcommand{\oo}{\underline{0}}
\newcommand{\sgm}{\sigma}
\newcommand{\ud}{{\rm d}}
\newcommand{\ula}{{\underline{a}}}
\newcommand{\ulb}{{\underline{b}}}
\newcommand{\wtE}{\widetilde{E}}
\newcommand{\bs}{ {\tiny $\blacksquare$} \\}
\numberwithin{equation}{section}
\numberwithin{equation}{section}
\begin{document}
\title[Blow-Analytic Equivalence versus $\cK$-bi-Lipschitz Equivalence]{Blow-Analytic Equivalence versus Contact bi-Lipschitz Equivalence}
\author[L. Birbrair]{Lev Birbrair}\thanks{L.~Birbrair was partially supported by CNPq-Brazil grant 300575/2010-6}
\author[A. Fernandes]{Alexandre Fernandes}\thanks{A.~Fernandes was partially supported by CNPq-Brazil grant 302998/2011-0}
\author[T. Gaffney]{Terence Gaffney}\thanks{T.~Gaffney was partially supported by PVE-CNPq Proc. 401565/2014-9}
\author[V. Grandjean]{Vincent Grandjean}
\address{L.~Birbrair, A.~Fernandes \& V.~Grandjean - 
Departamento de Matem\'atica, Universidade Federal do Cear\'a Av.
Humberto Monte, s/n Campus do Pici - Bloco 914, 60455-760
Fortaleza-CE, Brazil} 
\email{birb@ufc.br}
\email{alex@mat.ufc.br}
\email{vgrandjean@mat.ufc.br}
\address{T.~Gaffney,  Mathematics department, Northeastern University, 360 Huntington Ave., 02115 Boston-MA, USA}
\email{t.gaffney@neu.edu}
%
%
%
%
%
%
\date{\today}
%
%
\keywords{blow-analytic, Bi-Lipschitz, contact equivalence, pizza, width, blowing-up}
\begin{abstract}
{The main result of this note is that two blow-analytically equivalent 
real analytic plane function germs are sub-analytically bi-Lipschitz contact equivalent.}
\end{abstract}
\maketitle


\section{Introduction}\label{section:intro}
The bi-Lipschitz classification of regular (analytic, smooth,  definable) function germs is rather recent.
Beyond the plane case, very little is known.
As Parusi\'nski and Henry showed in \cite{HePa}, the bi-Lipschitz right equivalence of real analytic plane function
germs, unlike the classical case of the topological right equivalence \cite{Fuk}, already presents moduli. 

The blow-analytic equivalence (of real analytic function germs) introduced more than thirty years ago by 
Kuo \cite{Ku1}, nevertheless has no moduli. Roughly speaking a blow-analytic 
homeomorphism is obtained by blowing-down an analytic isomorphism between blown-up manifolds (obtained 
by finite composition of blowings-up at the source and at the target respectively). Although not bi-Lipschitz in general it is 
still a rigid homeomorphism.

Birbrair, Fernandes \& Grandjean, jointly with A. Gabrielov, recently exhibited a complete invariant, 
called \em minimal pizza, \em of the sub-analytic bi-Lipschitz contact equivalence of Lipschitz sub-analytic plane 
function germs \cite{BFGG}.  The existence of this complete invariant implies that this equivalence 
has no moduli (result already known in for smooth plane germs from a previous work of Birbrair and Fernandes, 
jointly with Costa and Ruas \cite{BCFR}). 
A pizza is a way to encode, by means of finitely many rational numbers, all the sub-analytic asymptotic 
behaviors of the considered function at the considered point. 
As the result of \cite{BFGG} states such combinatorial data encoding the asymptotics of a given germ
is indeed of metric nature. The local monomialization/resolution of (sub-)analytic 
function germs, although using a parameterization (the associated blowing-down mapping),
is very convenient to investigate the sub-analytic asymptotic behaviors at the considered point. 
In the category of real analytic plane function germs, such hints suggest looking for relations 
between blow-analytic equivalence and bi-Lipschitz contact equivalence. 
Nevertheless Koike \& Parusi\'nski have already provided examples of 
bi-Lipschitz right-equivalent real analytic plane function germs (thus bi-Lipschitz contact equivalent) 
which are not blow-analytic equivalent \cite{KoPa2}. 

This note establishes the relation between the blow-analytic equivalence and the sub-analytical
bi-Lipschitz contact equivalence, namely 

\medskip\noindent
{\bf Theorem \ref{thm:main}.} \em
Blow-analytic equivalent real analytic plane function germs are sub-analytically bi-Lipschitz
contact equivalent.
\em 

\smallskip
The proof of this result is a consequence of the combinatorial local data of both blowings-up 
mapping in the definition of the (cascade) blow-analytic homeomorphism and of the combinatorial data of the 
blow-analytic equivalent function germs (once resolved). 
The analytic isomorphism inducing the homeomorphism preserves the combinatorial data of the pair 
"resolved function and corresponding resolution mapping" (one of the composition of blowings-up) and 
the other pair "resolved function and corresponding resolution mapping". 
The combinatorial data of a given pair function/resolution can be computed explicitly, 
using the parameterization in the resolved manifold, and this data can be used 
to produce all the ingredients that are needed to cook-up a pizza of the function. 
These ingredients encoded by finitely many rational numbers, are preserved by the blow-analytic homeomorphism,
because of the isomorphism between the resolved manifolds preserving the combinatorial data of the 
pair function/resolution. 
More precisely our key argument is that \em a blow-analytic isomorphism preserves 
contact between any two real-analytic half-branches as well as the normalized order of a real analytic
function along a real-analytic half-branch \em (see also \cite{KoPa1}). When combined with 
an equivalent criterion of sub-analytic bi-Lipschitz contact equivalence,
stated in Proposition \ref{prop:criterion}, the blow-analytic homeomorphism
maps a pizza (of one function) into an equivalent pizza (of the other function), so that 
Theorem \ref{thm:main} is true by the results of \cite{BFGG}. 

\bigskip
The paper is organized as follows.

Section \ref{section:BAE} recalls quickly the notion of blow-analytic equivalence and 
two properties of metric nature. Section \ref{section:bLCE} presents the (sub-analytic) 
bi-Lipschitz contact equivalence with a new equivalence criterion, Proposition \ref{prop:criterion}. 
It is followed by Section \ref{section:HP} where 
Lemma \ref{lem:res-1} shows the local normal form of any finite sequence of points blowings-up
at any point of the exceptional divisor. This constitutes what we call the Hsiang \& Pati (local) data 
of the blowing-down mapping, and further in this section we see how it 
reflects in any (cascade) blow-analytic homeomorphism. 
 Section \ref{section:main} provides a complete proof of the main result via 
several simple intermediary results leading to Proposition \ref{prop:BA-pizza} 
stating that the considered blow-analytic homeomorphism induces an equivalence of Pizza,
although stricto sensu not being one.
The last section proposes a sketch of a 
proof of the existence of pizzas for plane real analytic function germs, providing clues why the 
result presented here was to be expected.

%
%
%
%
%
%
%
%
%
%
%
%
%
%
%
%
%
%
%
%
%
%
%
%
%
%
%
%
%
%
%
\section{Blow analytic equivalence}\label{section:BAE}
We will only work with real analytic plane function germs, following the exposition
of \cite{KoPa1}. 

\medskip
We start by fixing some notation we will use in the whole paper.

\smallskip
Let $\cO_2$ be the local $\R$-algebra of real analytic function germs $(\R^2,\oo) \to \R$ at 
the origin $\oo$ of $\R^2$. Let $\bm_2$ be its maximal ideal.

\smallskip
Let $S$ be a real analytic regular surface with structural sheaf $\cO_S$. 
Let $\ula$ be a point of $S$. We denote by $\cO_{S,\ula}$ 
the $\R$-algebra of real analytic function germs $(S,\ula) \to \R$. If $I$ is any (coherent) 
$\cO_S$-ideal sheaf, let $I_\ula$ be the $\cO_{S,\ula}$-ideal induced at $\ula$.

\medskip
\begin{definition}\label{def:BAhomeo}
A homeomorphism germ $h:(\R^2,\oo) \to (\R^2,\oo)$ is \em blow-analytic \em if there exists a commutative diagram
\[
\begin{array}{lll}
(M,E) & \stackrel{\pi}{\longrightarrow} & (\R^2,\oo) \\
\,\,\,\,\, \Phi \, \downarrow &  & \;\; h \downarrow \\
(M',E') & \stackrel{\pi'}{\longrightarrow} & (\R^2,\oo) \\
\end{array}
\]
with the following properties
\\
- $\Phi$ is a real analytic isomorphism which induces $h$.
\\
- Both mappings $\pi$ and $\pi'$ are finite composition of points blowings-up and 
$E$ and $E'$ are (simple) normal crossing divisors.
\end{definition}
Consequently any blow-analytic homeomorphism is sub-analytic. 
Such homeomorphisms clearly form of sub-group of the homeomorphism germs $(\R^2,\oo) \to (\R^2,\oo)$. 

\medskip
The blow-analytic equivalence of two function germs of $\cO_2$ is thus defined as 
expected:
\begin{definition}\label{def:BAE}
Two real analytic  function germs $f,g:(\R^2,\oo) \to \R$ are \em blow-analytic equivalent \em if there 
exists a blow-analytic homeomorphism $h:(\R^2,\oo) \to (\R^2,\oo)$  such that 
$f = g\circ h$.
\end{definition}
The blow-analytic equivalence is a right-equivalence. A remarkable fact of this equivalence
is that it does not admit moduli \cite{Ku1}.

\medskip
An \`a-priori refinement of the notion of blow-analytic homeomorphism is found in the next
\begin{definition}\label{def:casBAE}
A homeomorphism germ $h:(\R^2,\oo) \to (\R^2,\oo)$ is \em cascade blow-analytic \em if there exists a commutative diagram
\[
\begin{array}{lllllll}
(M_k,E_k) & \stackrel{\pi_k}{\longrightarrow}
 & (M_{k-1},E_{k-1}) & \stackrel{\pi_{k-1}}{\longrightarrow} \cdots & \stackrel{\pi_2}{\longrightarrow} & (M_1,E_1) & 
\stackrel{\pi_0}{\rightarrow}(\R^2,\bo) \\
\,\,\,\,\Phi \, \downarrow &  & \,\,\,\, \, h_{k-1} \, \downarrow  &  & & \,\,\,\, h_1 \downarrow& \,\,\,\,\,\, h \downarrow \\
(M_k',E_k') & \stackrel{\pi_k'}{\longrightarrow}
 & (M_{k-1}',E_{k-1}') & \stackrel{\pi_{k-1}'}{\longrightarrow} \cdots & \stackrel{\pi_2'}{\longrightarrow} & (M_1',E_1') & 
\stackrel{\pi_0'}{\rightarrow}(\R^2,\bo) \\
\end{array}
\]
with the following properties
\\
(i) $\Phi$ is a real analytic isomorphism which induces $h$.
\\
(ii)Each $\pi_i$ (resp. $\pi_j'$) is the blowing-up of a point in $M_{i-1}$ (resp. $M_{i-1}'$).
\\
(iii) Each $h_i:M_i\to M_i'$ is a homeomorphism such that $h_i(E_i) = E_i'$. 
\end{definition}
The condition $(iii)$ implies that the center of the next blowing-up in $M_i$ must be mapped by 
$h_i$ onto the center of the next blowing-up in $M_i'$. 

\smallskip
Although a cascade blow-analytic homeomorphism germ seems to be far more rigid than just a 
blow-analytic homeomorphism germ, it is not so as states the next
\begin{theorem}[\cite{KoPa1}]\label{thm:CBA=BA}
A homeomorphism germ $h:(\R^2,\oo) \to (\R^2,\oo)$ is blow-analytic if and 
only if it is cascade blow-analytic.
\end{theorem}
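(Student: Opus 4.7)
\emph{Proof plan.} The implication cascade $\Rightarrow$ blow-analytic is immediate by composing the blow-up sequences of Definition~\ref{def:casBAE} and retaining the top analytic isomorphism $\Phi$. For the converse I would argue by induction on the number $k$ of blow-ups in a resolution $\Phi : (M, E) \to (M', E')$ of $h$, with blow-downs $\pi = \pi_0 \circ \widetilde{\pi}$ and $\pi' = \pi_0' \circ \widetilde{\pi}'$, padding the shorter tower with auxiliary point blow-ups (absorbed into $\Phi$) so that both towers have the same length.

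The first blow-ups $\pi_0, \pi_0'$ are forced to be the blow-up of $\R^2$ at $\oo$, so $M_1$ and $M_1'$ are canonically identified with $\mathrm{Bl}_\oo(\R^2)$. The central task is to build a homeomorphism $h_1 : M_1 \to M_1'$ with $h_1(E_1) = E_1'$ that lifts $h$. Off the exceptional divisors, set $h_1 := (\pi_0')^{-1} \circ h \circ \pi_0$. The key technical point for extending $h_1$ continuously across $E_1$ is that $\Phi$ carries the strict transform $D_1 \subset M$ of $E_1$ under $\widetilde{\pi}$ onto the strict transform $D_1' \subset M'$ of $E_1'$ under $\widetilde{\pi}'$. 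Granted this, every other component of $E$ (contracted by $\widetilde{\pi}$ to a single point of $E_1$) is sent by $\Phi$ to a component of $E' \setminus D_1'$ (contracted by $\widetilde{\pi}'$ to a single point of $E_1'$); the combinatorial matching then descends to a continuous $h_1$, and sub-analyticity of $\Phi$ supplies homeomorphy.

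To prove $\Phi(D_1) = D_1'$, I would consider smooth real analytic curves $C \subset \R^2$ through $\oo$ whose tangent direction at $\oo$ avoids the finitely many centers of $\widetilde{\pi}$ lying on $E_1$. For each such $C$, the strict transform $\widetilde{C}$ in $M$ meets $E$ at a single point $p_C \in D_1$. The analytic image $\Phi(\widetilde{C}) \subset M'$ is a real analytic curve whose projection by $\widetilde{\pi}'$ meets $E_1'$ at a single point, which for generic $C$ avoids the finitely many centers of $\widetilde{\pi}'$; hence $\Phi(p_C) \in D_1'$. As $p_C$ ranges over an infinite subset of $D_1$ while $\Phi$ is injective, and two distinct components of $E'$ meet at only finitely many points, the component $\Phi(D_1)$ must coincide with $D_1'$.

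To iterate, observe that $h_1$ is itself blow-analytic, with the same $\Phi$ as resolution via the truncated towers $\widetilde{\pi}, \widetilde{\pi}'$ of length $k - 1$. Applying the inductive hypothesis germwise at each point of $E_1$ where $h_1$ fails to be analytic, and using the commutativity of blow-ups at distinct points of a surface to reorder the centers so as to synchronize the next blow-up in each tower, yields a cascade structure of length $k - 1$ for $h_1$; prepending the stage $(\pi_0, \pi_0', h_1)$ produces the cascade of length $k$ for $h$. The principal obstacle throughout is the identification $\Phi(D_1) = D_1'$ of the first strict transforms, which is the mechanism making every descent square in the cascade commute, and must be invoked afresh at each stage.
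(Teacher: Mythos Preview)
The paper does not prove Theorem~\ref{thm:CBA=BA}; it is quoted from Koike--Parusi\'nski \cite{KoPa1} and used as a black box, so there is no ``paper's own proof'' to compare against. Your proposal is therefore an attempt to reconstruct the argument of \cite{KoPa1}, not to match anything in the present paper.

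As for the proposal itself, the overall architecture (descend one level at a time, build the intermediate homeomorphisms $h_i$, and identify the strict transforms of the first exceptional curves) is indeed the shape of the Koike--Parusi\'nski argument. However, your justification of the key step $\Phi(D_1)=D_1'$ is incomplete. You assert that for a generic smooth curve $C$ through $\oo$ the image $\widetilde{\pi}'(\Phi(\widetilde{C}))$ meets $E_1'$ away from the centers of $\widetilde{\pi}'$, but you give no mechanism for this: a priori nothing prevents $\Phi$ from sending an open arc of $D_1$ into a component of $E'$ that $\widetilde{\pi}'$ collapses to a single center on $E_1'$, and your curve argument then yields no information. The actual proof in \cite{KoPa1} identifies $D_1$ (respectively $D_1'$) intrinsically as the unique component of $E$ (respectively $E'$) along which the pulled-back maximal ideal has order~$1$, and then shows that this numerical invariant is preserved by $\Phi$; this is essentially the content of the identity $\Phi^*I_{\pi'}=I_\pi$ recorded later in the paper as Corollary~\ref{cor:HP}, and it is what forces $\Phi(D_1)=D_1'$ without any genericity appeal. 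Your ``padding'' step is also imprecise: inserting extra blow-ups on one side destroys the isomorphism $\Phi$ unless you simultaneously blow up the corresponding points on the other side, and arranging this compatibly is part of what the cascade structure is supposed to deliver, not something you can assume at the outset.
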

As we will see in the proof of our main result in Section \ref{section:main}, 
the property described in Theorem \ref{thm:CBA=BA} is absolutely fundamental, since the combinatorial 
data controlling the blow-analytic maps and the equivalent functions (once resolved)
are depending mostly on the component(s) of the exceptional divisor(s).

\medskip
%
A (real analytic) \em half-branch \em is the image of the
restriction, to the germ at $0$ of the non-negative real half-line $\R_{\geq 0}$, of a non constant 
real analytic map germ $(\R,0) \to (\R^2,\oo)$. 

\smallskip
Let $\cC_1$ and $\cC_2$ be two half-branches. For $r$ positive and small enough,
let $\bS_r$ be the Euclidean sphere centered at $\oo$ of radius $r$. 
Let $c(r) = \dist(\cC_1\cap\bS_r, \cC_2\cap \bS_r)$. The function $c$ is sub-analytic
and thus admits a Puiseux expansion of the form 
\begin{center}
$c(r) = r^\beta\cdot (\alpha + \phi(r))$
\end{center}
where $\beta \in ([1,+\infty[\cap \Q)\cup\{+\infty\}$; by convention $\beta = + \infty$ if and 
only if $\cC_1 = \cC_2$, where $\phi$ is a sub-analytic function germ 
tending to $0$ at $0$ and where $\alpha$ is a positive real number.
The \em order of contact between the real analytic half-branches $\cC_1$ and $\cC_2$ \em is the 
rational number $\beta$.  

\smallskip
We are interested in the following properties of blow-analytic homeomorphism germs, which are 
similar to some of (sub-analytic) bi-Lipschitz homeomorphism germs.
\begin{proposition}[\cite{KoPa1}]\label{prop:BA}
Let $h:(\R^2,\oo) \to (\R^2,\oo)$ be a germ of blow-analytic homeomorphism.

\smallskip 
(1) The order of contact between real analytic half-branches is preserved by $h$.  

(2) There exists constants $0<A<B$ such that for $r$ positive and small enough 
\begin{center}
$A|x| \leq |h(x)| \leq B|x|$ once $|x|\leq r$.
\end{center}
\end{proposition}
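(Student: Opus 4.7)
The plan is to reduce both assertions to the comparison, on $M$, of the two subanalytic continuous functions $\pi^{*}r$ and $\Phi^{*}\pi'^{*}r$, where $r=|\cdot|$ denotes Euclidean distance to $\oo$. Since $E = \pi^{-1}(\oo)$ and $E' = \pi'^{-1}(\oo)$, the identity $\pi' \circ \Phi = h \circ \pi$ immediately forces $\Phi(E) = E'$, so $\Phi$ induces a bijection between the irreducible components of the two exceptional divisors together with a graph isomorphism of their dual graphs.

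The engine of the proof is the monomial (Hsiang--Pati) normal form for a finite composition of point blowings-up: at every $p \in E$ there exist local analytic coordinates $(u,v)$ in which the branches of $E$ through $p$ are coordinate axes and $\pi$ is a monomial times an analytic unit in each component, whence $\pi^{*}r = u^{a}v^{b}\cdot\phi$ with $\phi$ a positive analytic unit near $p$. The same normal form applied to $\pi'$ at $\Phi(p)$ yields $\Phi^{*}\pi'^{*}r = u^{a'}v^{b'}\cdot\psi$ in the same coordinates. The critical step is to prove that $(a,b)=(a',b')$. I would argue this by invoking Theorem \ref{thm:CBA=BA} and inducting on the length of the cascade: the exponent attached to a newly created component of $E$ equals either the exponent of the component containing the blow-up center (smooth center), or the sum of the two exponents of the components meeting at the center (crossing center). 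The condition $h_i(E_i)=E_i'$ in Definition \ref{def:BAhomeo} (cascade version) guarantees that centers on the two sides correspond at every stage, so the inductive hypothesis propagates the equality of exponents.

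Once the exponent equality is in hand, part $(2)$ follows directly: near each point of $E$ the ratio $\Phi^{*}\pi'^{*}r / \pi^{*}r$ reduces to $\psi/\phi$, a continuous positive analytic function, hence bounded above and below by positive constants on a suitable neighborhood of the compact set $E$; pushing down through $\pi$ gives $A|x| \le |h(x)| \le B|x|$ in a neighborhood of $\oo$. For part $(1)$, each half-branch $\cC_i$ lifts uniquely through $\pi$ to an analytic half-branch $\tilde{\cC}_i \subset M$ meeting $E$ at a single point $p_i$, and $\Phi(\tilde{\cC}_i)$ is the lift of $h(\cC_i)$ in $M'$. If $p_1 \neq p_2$ both contact orders are trivially $1$, so assume $p_1 = p_2 = p$; then, describing $\cC_i \cap \bS_r$ via the monomial form of $\pi$ at $p$ and using the Puiseux parametrizations of $\tilde{\cC}_1, \tilde{\cC}_2$, one reads off $\beta(\cC_1,\cC_2)$ as an explicit rational function of $(a,b)$ and the initial Puiseux data of the two lifts. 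The analytic isomorphism $\Phi$ preserves both, so $\beta$ is invariant.

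The main obstacle is precisely the combinatorial identity $(a,b)=(a',b')$. An arbitrary analytic automorphism sending $E$ to $E'$ need not respect any prescribed factorization into point blowings-up, and only the matching of cascades furnished by Theorem \ref{thm:CBA=BA} forces the multiplicities attached to corresponding components to coincide. Beyond this combinatorial bookkeeping, the remainder is a routine computation in monomial coordinates combined with compactness of $E$.
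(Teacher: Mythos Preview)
Your overall architecture matches the paper's: reduce both statements to the monomial (Hsiang--Pati) local form of $\pi$ at points of $E$, establish that $\Phi$ preserves the Hsiang--Pati exponents via the cascade description (the paper packages this as $\Phi^*I_\pi=I_{\pi'}$ and $\Phi^*J_\pi=J_{\pi'}$ in Corollary~\ref{cor:HP}), and then read off both the distance estimate and the contact order from these exponents. Your ratio argument for part~(2) is a clean variant of the paper's arc-by-arc computation and is fine; the unit $\phi$ in $\pi^*r=u^av^b\phi$ is only sub-analytic, not analytic, but positivity and continuity near $E$ are all you use.

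There is, however, a genuine error in your treatment of part~(1). You write that if the lifts meet $E$ at distinct points $p_1\neq p_2$ then ``both contact orders are trivially $1$''. This is false. If $\tilde\cC_1$ and $\tilde\cC_2$ hit the \emph{same} component $H$ of $E$ at different points, and $(l,m)$ is the Hsiang--Pati data along $H$, then the contact order downstairs is $m/l$, which is in general $>1$ (think of $y=x^2$ and $y=2x^2$: two blowings-up separate their lifts, yet their contact is $2$). If the lifts land on different components, the contact is $\min_i m_i/l_i$ over the chain of components joining them. The paper handles this by an explicit case analysis (its Claims i)--iv) in the proof of Proposition~\ref{prop:BAmetric}): same component/distinct points, adjacent components, same point (regular or corner), and components separated by a chain. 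In each case the contact order is an explicit rational expression in the Hsiang--Pati exponents (and, in the coincident-point case, in the Puiseux data of the lifts), all of which are preserved by $\Phi$. Your argument for the coincident-point case is correct in spirit, but you must replace the ``trivially $1$'' claim by this component-wise computation; once you do, the invariance of $\beta$ again follows from the exponent equality you already proved.
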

We will show these properties in Section \ref{section:HP}. 
%
%
%
%
%
%
%
%
%
%
%
%
%
%
%
%
%
%
%
%
%
%
%
%
%
%
%
%
%
%
%
\section{Bi-Lipschitz contact equivalence}\label{section:bLCE}
We give here a short account of the (sub-analytic) bi-Lipschitz contact equivalence with a focus on the special case of plane real analytic function germs. 

We present an equivalent criterion below, to be used to demonstrate the main result of this note, which, although obvious, 
was not observed in \cite{BFGG}.

\medskip
The contact equivalence between (smooth) mappings, as per se, was first introduced by Mather \cite{Mat}.
The natural extension of Mather's definition to the Lipschitz setting in the function case appeared in \cite{BCFR}, 
and to the general case in \cite{RV}
\begin{definition}
Two map-germs  $f,g: (\R^n,0) \longrightarrow (\R^p,0)$ are
called \emph{$\cK$-bi-Lipschitz equivalent} (or
\emph{contact bi-Lipschitz equivalent}) if there exist two  germs of
bi-Lipschitz  homeomorphisms $h:(\R^n,0) \longrightarrow (\R^n,0)$
and  $H: (\R^n \times \R^p,0) \longrightarrow (\R^n \times \R^p,0)$ such
that $H(\R^n \times \{0 \}) = \R^n \times \{0 \}$ and the following
diagram is commutative:

\[
\begin{array}{lllll}
(\R^n,0) & \stackrel{(id,\, f)}{\longrightarrow}
 & (\R^n \times
\R^p,0) & \stackrel{\pi_n}{\longrightarrow} & (\R^n,0) \\
\,\,\,h \, \downarrow &  & \,\,\,\, H \, \downarrow  &  & \,\,\, h \, \downarrow \\
(\R^n,0)& \stackrel{(id, \,g)}{\longrightarrow} & (\R^n \times
\R^p,0) & \stackrel{\pi_n}{\longrightarrow}& (\R^n,0) \\
\end{array}
\]

\medskip

\noindent where  $id:\R^n  \longrightarrow \R^n$ is the identity map
and $\pi_n: \R^n \times \R^p \longrightarrow \R^n$ is the canonical
projection.

The map-germs  $f$ and $g$ are called
\emph{$\mathcal{C}$-bi-Lipschitz equivalent} if  $h=id$.
\end{definition}

We can think of a $\mathcal{C}$-bi-Lipschitz equivalence as being a family of origin preserving bi-Lipschitz maps of $\R^p$ to $\R^p$, parameterized by $\R^n$ which carry $f(x)$ to $g(x)$ for all $x\in\R^n$. If $p=1$, then $\R-\{0\}$ is the union of two contractible sets; each homeomorphism creates a bijection between these components, and this bijection is independent of $x$. This gives an equivalent form for contact equivalence
in the bi-Lipschitz category if $p=1$:

\begin{theorem}[\cite{BCFR}]\label{theorem:CbLE}
Let $f,g:(\R^n,\oo) \to \R$ be two smooth function germs. The functions $f$ and $g$ are
\em bi-Lipschitz contact equivalent \em (or \em $\cK$-bi-Lipschitz equivalent\em ) if there exists a bi-Lipschitz homeomorphism 
germ $h:(\R^n,\oo) \to (\R^n,\oo)$, there exist non zero,  same sign constants $A,B$
and a sign $\sgm \in \{-1,1\}$ such that in a neighborhood of the origin $\oo$ the following 
inequalities hold true 
\begin{center}
$A \cdot g < \sgm \cdot f\circ h < B \cdot g$.
\end{center}
If furthermore $h$ is required to be sub-analytic we will then speak of \em
sub-analytic bi-Lipschitz contact equivalence. \em 
\end{theorem}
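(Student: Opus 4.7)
The plan is to establish the criterion as an equivalence with the definition of $\cK$-bi-Lipschitz equivalence (for $p=1$), proving each implication in turn. The driving structural observation for both directions is that the commutativity $\pi_n \circ H = h \circ \pi_n$ forces $H$ to be fibered over $h$:
\begin{equation*}
 H(x, y) = (h(x), \psi(x, y)), \qquad \psi(x, 0) = 0,
\end{equation*}
the vanishing on $\R^n \times \{0\}$ coming from its preservation by $H$, and the remaining constraint $(id, g)\circ h = H \circ (id, f)$ translating into $\psi(x, f(x)) = g(h(x))$. The whole argument therefore reduces to an analysis of this vertical transfer function $\psi$.

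For the necessity direction, suppose $(h, H)$ realizes the $\cK$-bi-Lipschitz equivalence. Restricting $H$ to any vertical fiber $\{x\} \times \R$ shows that $\psi(x, \cdot)$ is bi-Lipschitz with constants $0 < c_1 \leq c_2$ inherited from those of $H$, uniformly in $x$; together with $\psi(x, 0) = 0$ this yields $c_1 |y| \leq |\psi(x, y)| \leq c_2 |y|$. Strict monotonicity of $\psi(x, \cdot)$ combined with continuity of $\psi$ on a connected neighborhood of $\oo$ makes the sign of $\psi(x, y)/y$ locally constant, equal to a single $\sigma \in \{-1, +1\}$. Evaluating at $y = f(x)$ then gives $\sigma g(h(x))$ and $f(x)$ of the same sign, with $c_1|f(x)| \leq |g(h(x))| \leq c_2|f(x)|$. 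Substituting $x \leftarrow h^{-1}(z)$ and setting $\tilde h := h^{-1}$ (which is bi-Lipschitz, and sub-analytic whenever $h$ is) rewrites this as the claimed comparison $A g < \sigma f \circ \tilde h < B g$ with $A = 1/c_2$, $B = 1/c_1$.

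Conversely, given bi-Lipschitz $h$ and the comparison, I would set $H_0 := h^{-1}$ and seek $H$ of the form $H(x, y) = (H_0(x), \psi(x, y))$; the conditions become $\psi(x, 0) = 0$ and $\psi(x, f(x)) = g(H_0(x))$. A convenient explicit interpolation is
\begin{equation*}
 \psi(x, y) = \sigma y + \bigl(g(H_0(x)) - \sigma f(x)\bigr) \cdot \mu\!\left(\frac{y}{f(x)}\right),
\end{equation*}
where $\mu: \R \to \R$ is a fixed smooth cut-off with $\mu(0) = 0$, $\mu(1) = 1$, and $\mu$ constant outside a bounded interval. The hypothesis forces $g(H_0(x)) = \sigma f(x) = 0$ on $\{f = 0\}$, so the prefactor cancels the apparent singularity of $y/f(x)$ there, and $\psi$ extends continuously as $\sigma y$ across that locus.

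The main obstacle, and the most technical step of the proof, is to verify that the $\psi$ so constructed is \emph{jointly} bi-Lipschitz in $(x, y)$ rather than merely fibrewise bi-Lipschitz in $y$. The pointwise ratio $g(H_0(x))/f(x)$ supplied by the hypothesis is uniformly bounded but need not be Lipschitz in $x$; the virtue of the truncation by $\mu(y/f(x))$ is to confine the correction to the strip $|y| \lesssim |f(x)|$, where the algebraic factor $g(H_0(x)) - \sigma f(x)$ vanishes at the same rate as $f(x)$ and thereby controls what would otherwise be singular $x$-derivatives. The two-sided comparison with $A$ and $B$ of the same non-zero sign then supplies the Lipschitz lower bound on $\psi(x, \cdot)$, hence on $H$, and sub-analyticity of the entire construction (when $h$ is sub-analytic) is immediate from that of the ingredients.
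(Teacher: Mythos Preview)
The paper does not actually prove this statement: its proof is the single line ``See theorem 2.4 of \cite{BCFR}'', followed by a one-sentence sketch of the sufficiency direction only, namely that boundedness of $f/g$ makes the \emph{linear} fibre map $(x,y)\mapsto \frac{f(x)}{g(x)}\,y$ Lipschitz on the strip $0\le|y|\le|g(x)|$, and this map carries $g$ to $f$. Your necessity argument (restrict $H$ to vertical fibres, extract the uniform bi-Lipschitz constants and the sign $\sigma$) is the standard one and is correct.

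For sufficiency you take a different route from the sketch in \cite{BCFR}: instead of the linear fibre map $y\mapsto(f/g)\,y$ on a strip, you propose a global additive interpolation $\psi(x,y)=\sigma y+(g\circ H_0-\sigma f)\,\mu(y/f)$. This is a legitimate alternative, but there is a gap in your verification of the bi-Lipschitz \emph{lower} bound. Computing
\[
\partial_y\psi \;=\; \sigma \;+\;\Bigl(\tfrac{g\circ H_0}{f}-\sigma\Bigr)\,\mu'\!\bigl(y/f\bigr),
\]
one sees that for a generic cut-off with $\mu(0)=0$, $\mu(1)=1$ the derivative $\mu'$ must reach values $\ge 1$ on $[0,1]$; if the bounded ratio $r:=g\circ H_0/f$ happens to be far from $\sigma$ (nothing in the hypothesis prevents this), $\partial_y\psi$ can vanish and $\psi(x,\cdot)$ fails to be injective. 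The fix is to impose $0\le\mu'\le 1$ (achievable: take $\mu(t)=\int_0^t\chi$ for a smooth bump $\chi$ with $0\le\chi\le1$ and $\chi\equiv1$ on $[0,1]$); then $\partial_y\psi=(1-\mu')\sigma+\mu'\,r$ is a convex combination of $\sigma$ and $r$, which share a sign and are both bounded away from $0$ by hypothesis. With this constraint in place, your boundedness-of-gradients estimate on $\{f\ne0\}$ together with continuity across the null set $\{f=0\}$ does yield joint Lipschitzness of $\psi$, and the construction goes through. The trade-off against the \cite{BCFR} construction is that yours is globally defined from the outset, while theirs is simpler (linear in $y$) but must be patched outside the strip.
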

\begin{proof} See theorem 2.4 of \cite{BCFR}.
\end{proof}
The inequality enters into the proof of the theorem as follows. The functions $x\to \frac{f(x)}{g(x)}$ 
are bounded by the inequality; this implies that the functions  $(x,y)\to\frac{f(x)}{g(x)}\cdot y$ are Lipschitz on the open set
$0\le |y|\le |g(x)|$, where $y$ is the coordinate on $\R$ (\cite{BCFR}). Since $\frac{f}{g}\cdot y\circ g=f$, the functions 
$\frac{f}{g}\cdot y$ 
on $0\le |y|\le |g(x)|$ can be used as part of a family of bi-Lipschitz homeomorphisms. 
%
%
%
%
%

\medskip
We recall that the bi-Lipschitz contact equivalence of bi-Lipschitz functions has no moduli \cite{BCFR}.  

\medskip
Specializing to the case of $f:(\R^2,\oo) \to \R$, we are going now to recall some material presented 
in \cite{BFGG}. We will keep it to the minimum needed here.

\smallskip
An \em arc \em (\em at the origin\em) is the Puiseux series parameterization $\gm:(\R_{\geq 0},0) \to (\R^2,\oo)$ of a 
given real analytic half-branch such that 
\begin{center}
$|\gm(t)| = t$.
\end{center} 
As an abuse of language we will confuse the notion of arc with its image.

The \em order of contact between two arcs \em means the order of contact
between the respective half-branch images. 

\smallskip
Let $\gm$ be an arc and let $f\in \cO_2$ be a real analytic function germ.
The function germ $t \to f\circ\gm(t)$ is a (converging) Puiseux series 
written as   
\begin{center}
$f\circ\gm(t) = t^\nu(A + \psi(t))$,
\end{center} 
where $\nu \in ]0,+\infty]\cap \Qbar$, with $\Qbar :=\Q\cup\{+\infty\}$; by convention $\nu = + \infty$ if and 
only if $f\circ\gm$ is identically $0$, and otherwise  $\psi$ is a sub-analytic function germ 
tending to $0$ at $0$ and where $A$ is a non-zero real number.
The rational number $\nu_f(\gm) := \nu$ is the \em normalized order \em(\em at the origin\em) 
of the function $f$ along the half-branch $\gm$. 
The \em normalized order \em is the function 
\begin{center}
$\nu_f: \{arcs\; at\; \oo\} \to \Qbar$ defined as $\gm \to \nu_f(\gm)$.
\end{center} 
It has the following properties:
\begin{remark}
The normalized order of $f$ is always larger than or equal to the multiplicity $m_f$ of $f$ at $\oo$.
It is also preserved under sub-analytic bi-Lipschitz contact equivalence.
\end{remark}
A \em H\"older triangle, \em introduced in \cite{Bir}, is any image of the quadrant 
$(\R_{\geq 0}\times\R_{\geq 0},\oo)$ by a sub-analytic homeomorphism  $(\R^2,\oo) \to (\R^2,\oo)$.
The \em exponent \em $\beta(T)$ of a given H\"older triangle $T$ is the order of contact 
at the origin of the boundary curve of $T$ (the images of the axis of the quadrant). 
By extension and also abuse of language if we speak of a H\"older triangle of exponent $+\infty$, we just mean a 
half-branch. A second abuse of language is that any neighborhood of the origin is 
considered as a H\"older triangle with exponent $1$ but boundary-less. 
\begin{remark}
Since (sub-analytic) bi-Lipschitz homeomorphisms preserve the order of contact between
curves, the image of a H\"older triangle by such an homeomorphism is a H\"older triangle
with the same exponent.
\end{remark}
%
%
%
%
\begin{definition}\label{def:width}
Let $\gm$ be an arc, let $f\in \cO_2$ and let $q = \nu_f (\gm) \geq m_f$. 

The \em width \em of $f$ along $\gm$ is the infimum of the exponents taken among 
all the H\"older triangles $T$ containing the arc $\gm$ and such that the normalized order 
$\nu_f$ stays constant and equal to $q$ along any arc contained in $T$. 
This infimum exists and is rational \cite{BFGG}.
It is infinite if and only if $f$ vanishes identically along $\gm$.

We denote this number by $\mu_f(\gm)$.
\end{definition}
\begin{remark}
The width of $f$ along arcs is preserved by the (sub-analytic) bi-Lipschitz contact equivalence.

As can be seen in \cite{BFGG}, the notion of width is at the basis of the construction of the complete invariant 
\em - minimal pizza - \em of a sub-analytic bi-Lipschitz contact equivalence class.
\end{remark}
The criterion we mention is the following result. We give it in the real analytic category, but it is valid in the 
category used in \cite{BFGG} for the same reasons as those of the real analytic category.
\begin{proposition}\label{prop:criterion}
Two real analytic function germs $f,g:(\R^2,\oo) \to (\R,0)$ are (sub-analytically) bi-Lipschitz contact 
equivalent if and only if there exists a (sub-analytic) homeomorphism $h: (\R^2,\oo) \to (\R^2,\oo)$
such that 

(i) for any arc $\gm$, we get 
\begin{center}
$\nu_f(\gm) = \nu_g(h_*\gm)$, 
\end{center}
where $h_*\gm$ is the arc corresponding to the half-branch $h(\gm)$. 

(ii) for any arc $\gm$, we get 
\begin{center}
$\mu_f(\gm) = \mu_g(h_*\gm)$.
\end{center}
\end{proposition}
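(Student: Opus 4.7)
The ``only if'' direction is essentially a restatement of the two invariance remarks in this section: the normalized order $\nu_f$ and the width $\mu_f$ are both preserved by sub-analytic bi-Lipschitz contact equivalence. Thus if $h$ is a sub-analytic bi-Lipschitz homeomorphism realizing the equivalence between $f$ and $g$, conditions (i) and (ii) hold automatically.

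For the ``if'' direction, rather than attempting to upgrade the given homeomorphism $h$ to a bi-Lipschitz one directly, my plan is to use it to transport enough asymptotic information from $f$ to $g$ to conclude that $f$ and $g$ have equivalent minimal pizzas, and then invoke the completeness result of \cite{BFGG}. Fix a minimal pizza $P_f$ of $f$, with H\"older-triangle pieces $T_1,\dots,T_N$ and boundary arcs $\gm_0,\dots,\gm_N$. The pizza $P_f$ is recorded by finitely many rational numbers: the exponents $\beta(T_i)$, the normalized orders $\nu_f(\gm_i)$ along the boundary arcs, and the widths $\mu_f(\gm)$ of arcs $\gm$ interior to each $T_i$. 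I would apply (i) to each boundary arc $\gm_i$ to obtain a half-branch $h(\gm_i)$ of $g$ with matching normalized order, and apply (ii) to arcs interior to each $T_i$ to obtain matching widths on the $g$-side. The decomposition built from the images $h(\gm_i)$ should then be a pizza of $g$ whose combinatorial data coincides with that of $P_f$; by the classification in \cite{BFGG}, the minimal pizzas of $f$ and $g$ are equivalent, and consequently $f$ and $g$ are sub-analytically bi-Lipschitz contact equivalent.

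The main technical point, and where I expect the real obstacle, is verifying that the transported decomposition is genuinely a pizza of $g$. A sub-analytic homeomorphism need not preserve the order of contact between half-branches, so I cannot assert a priori that $h(T_i)$ is a H\"older triangle of the same exponent as $T_i$. The way around this is the observation that on a pizza piece $T$ on which $\nu_f$ is constant, the width of any interior arc equals $\beta(T)$; hypothesis (ii) then forces the exponents of the pieces produced by the half-branches $h(\gm_i)$ to match those of the corresponding $T_i$. In effect, the two hypotheses together recover the H\"older-triangular structure on the $g$-side from the one on the $f$-side, without invoking any geometric property of $h$ beyond its being a homeomorphism; the remaining work is to verify that the widths of all interior arcs inside a transported piece are indeed controlled, which follows from (ii) applied arc by arc inside each $T_i$.
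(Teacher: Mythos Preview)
Your approach is essentially the paper's: both directions reduce to the completeness of minimal pizzas from \cite{BFGG}, with the ``if'' direction arguing that conditions (i) and (ii) force $f$ and $g$ to have combinatorially equivalent minimal pizzas. The paper is terser---it simply cites \cite[Sections~2 \& 3]{BFGG} rather than sketching how the H\"older exponents are recovered from (ii)---and for the ``only if'' direction it takes $h$ to be an equivalence of minimal pizzas rather than the contact bi-Lipschitz map itself, but both choices work.
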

\begin{proof}
If the functions are already bi-Lipschitz contact equivalent, any equivalence of minimal pizzas 
is such a homeomorphism (see \cite{BFGG}).

\medskip
Assume there exists such a sub-analytic homeomorphism. Conditions $(i)$ and $(ii)$ implies 
that $f$ and $g$ have combinatorially equivalent minimal pizzas \cite[Sections 2\& 3]{BFGG}, which is
sufficient to imply that both functions are sub-analytically bi-Lipschitz contact equivalent. 
 
\medskip 
We do not use that $f,g$ are real analytic, but just that they are sub-analytic  
and Lipschitz in order to apply \cite{BFGG}.
\end{proof}
%
%
%
%
%
%
%
%
%
%
%
%
%
%
%
%
%
%
%
%
%
%
%
%
%
%
%
%
%
%
%
%
%
%
%
\section{Local normal form at exceptional points}\label{section:HP}
This section is devoted to the local form of the blowing down mapping of a finite composition
of point blowings-up initiated with the blowing-up of the origin $\oo$ of $\R^2$.
We can use such semi-local data in order to investigate the asymptotic properties at $\oo$ 
of the restriction of a function germ $f\in \cO_2$ to meaningful H\"older triangles (namely in which the function
is "monomial"). 

\medskip
Let $\pi: (M,E) \to (\R^2,\oo)$ be a finite composition of points blowings-up. 


Local analytic coordinates $(u,v)$ centered at $\ula \in E$ are called \em adapted to $E$ at $\ula$ \em
if the following inclusions for germs hold true
\begin{center}
$\{u=0\} \subset (E,\ula) \subset \{u\cdot v =0\}$.
\end{center}
Any local coordinate system at $\ula \notin E$ is adapted to $E$ at $\ula$.

A (coherent) $\cO_M$-ideal sheaf $\cI$ is principal and monomial in $E$ if it is co-supported in $E$ 
and at any point $\ula$ of $E$, the ideal is locally generated by a monomial in local
coordinates adapted to $E$ at $\ula$:
\\
- if $(u,v)$ are local adapted coordinates at a \em smooth/regular point $\ula$ of $E$, \em that is such that $(E,\ula) =\{u=0\}$, 
we find that $\cI_\ula$ is locally generated by $u^p$ for some non-negative integer $p$;
\\
- if $(u,v)$ are local adapted coordinates at a \em corner point $\ula$ of $E$, \em that is such that 
$(E,\ula) =\{u\cdot v=0\}$, we find that $\cI_\ula$ is locally generated bu $u^pv^q$ for some non-negative 
integers $p,q$. 

\smallskip 
As part of the folklore, we recall the following two facts (both proved by induction on the 
number of blowings-up):

i) The pull back $I_\pi:= \pi^*(\bm_2)$ of the maximal ideal is principal and monomial in $E$  and its
co-support is $E$. 

ii) Let $J_\pi$ be the ideal generated by the determinant of $\ud \pi$.  Then it is also principal and monomial and 
its co-support is also $E$ (see \cite[p. 217]{BM2}). 

\bigskip
The local form of $\pi$ at any point of the exceptional divisor is given in the following
\begin{lemma}\label{lem:res-1}
Let $\pi : (M,E) \to (\R^2,0)$ be the finite composition of point blowings-up presented above.
The mapping $\pi$ has the following properties:

\smallskip
(i) For any non corner point $\ula$ of the normal crossing divisor $E$, there exists $(u,v)$ local 
adapted coordinates to $E$ at $\ula$ such that $(E,\ula) = \{u=0\}$ such that $I_{\pi,\ula} = (u^l)$ and 
$J_{\pi,\ula} = (u^\bl)$ with $\bl\geq 2l-1$. Moreover, (and up to a rotation in the target $(\R^2,0)$)
we write the mapping $\pi$  at $\ula$ as 
\begin{equation}\label{eq:res-map-1}
(u,v) \to (\alpha u^l,u^l \phi(u) + \beta u^mv),
\end{equation} 
where $l\leq m$ are positive integer numbers and the function $\phi$ is analytic and $\alpha,\beta \in \{-1,+1\}$.
\\ 
Note that $\bl=l + m - 1$.
 
\smallskip
(ii) For any non corner point $\ula$ of the normal crossing divisor $E$, there exists $(u,v)$ local 
adapted coordinates to $E$ at $\ula$ such that $(E,\ula) = \{u=0\}$ such that $I_{\pi,\ula} = (u^lv^m)$ and 
$J_{\pi,\ula} = (u^\bl v^\bm)$ with $\bl\geq 2l-1$ and $\bm \geq 2m-1$. Moreover, (and up to a rotation in the target $(\R^2,0)$), locally
the mapping $\pi$  is written at $\ula$ as 
\begin{equation}\label{eq:res-map-2}
(u,v) \to (\alpha u^lv^m,\phi(u^lv^m) + \beta u^nv^p)
\end{equation}
where $l\leq n$ and $m\leq p$ are positive integer numbers
such that the plane vectors $(l,m)$ and $(n,p)$ are not co-linear ($(l,m)\wedge (n,p) \neq 0$), 
the function $\phi:t\to t\cdot h(t^{1/e})$ for a real 
analytic function $h$ and a positive integer $e$, and $\alpha,\beta \in \{-1,+1\}$.
\\
We also see that $\bl=l + n - 1$ and $\bm= m + p- 1$.
\end{lemma}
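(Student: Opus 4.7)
The plan is to proceed by induction on the number $k \geq 1$ of point blowings-up in the composition $\pi = \sigma_1 \circ \cdots \circ \sigma_k$, verifying the normal form at every point of the normal crossing divisor $E = \pi^{-1}(\oo)$. The base case $k = 1$ is the blow-up of $\oo \in \R^2$: in each standard chart, $\pi$ has the form $(u,v) \mapsto (u, uv)$ or $(uv, v)$, which fits form (i) with $l = m = 1$, $\phi \equiv 0$, $\alpha = \beta = 1$, and $\bl = 1 = l + m - 1$; no corner points exist yet so form (ii) is vacuous. For the inductive step, I would assume the lemma for $\pi_{k-1} : (M_{k-1}, E_{k-1}) \to (\R^2, \oo)$, write $\pi = \pi_{k-1} \circ \sigma$ with $\sigma$ the blowing-up of some $\ulb \in M_{k-1}$, and examine the points of $E$ above $\ulb$; away from $\sigma^{-1}(\ulb)$ the conclusion is inherited, and if $\ulb \notin E_{k-1}$ then $\sigma$ is a local isomorphism near the relevant points so nothing changes. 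The substantive cases are when $\ulb$ is a smooth point (Case A) or a corner (Case B) of $E_{k-1}$.

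In Case A I would apply the inductive form \eqref{eq:res-map-1} for $\pi_{k-1}$ at $\ulb$ and substitute the two standard charts $(u, v) = (\tilde u, \tilde u \tilde v)$ and $(u, v) = (\tilde u \tilde v, \tilde v)$ of $\sigma$. The first chart yields a smooth-point form (i) with $(l, m)$ replaced by $(l, m+1)$ at the chart origin (and, after a translation $\tilde v \mapsto \tilde v + c$, at every smooth point of the new exceptional, with a modified but still real analytic $\phi$); the second chart produces a corner form (ii) with $(l', m') = (l, l)$, $(n', p') = (m, m+1)$, and $\phi_{\rm new}(w) = w \cdot \phi(w^{1/l})$, which is exactly the prescribed form $\phi_{\rm new}(t) = t \cdot h(t^{1/e})$ with $h = \phi$ and $e = l$. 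In Case B, the inductive form \eqref{eq:res-map-2} pulled back through either chart of $\sigma$ yields a corner form (ii), e.g. with $(l', m', n', p') = (l+m, m, n+p, p)$; the function $\phi$ is unchanged and the divisibility $e \mid \gcd(l', m') = \gcd(l, m)$ is automatic. In either case the non-colinearity $l' p' - m' n' = lp - mn \neq 0$, the orderings $l' \leq n'$ and $m' \leq p'$, and the ideal equalities $I_\pi = (u^{l'})$ resp. $(u^{l'} v^{m'})$ follow by direct bookkeeping. The Jacobian identities $\bl = l' + m' - 1$ in (i) and $\bl = l' + n' - 1$, $\bm = m' + p' - 1$ in (ii) come from the chain rule $J_\pi = (J_{\pi_{k-1}} \circ \sigma) \cdot J_\sigma$ together with $J_\sigma = (\tilde u)$ or $(\tilde v)$, and the required inequalities $\bl \geq 2l' - 1$, $\bm \geq 2m' - 1$ follow from the orderings just verified.

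The main obstacle is not the monomial exponents, which are simple linear combinations of the previous ones, but the normal form of $\phi$. Each time a smooth point of $E_{k-1}$ is blown up into a corner of $E$ (the second chart of Case A), the pulled-back second coordinate $(\tilde u \tilde v)^l \phi(\tilde u \tilde v) + \beta \tilde u^m \tilde v^{m+1}$ must be reorganized as $\phi_{\rm new}(\tilde u^{l'} \tilde v^{m'}) + \beta \tilde u^{n'} \tilde v^{p'}$ with $\phi_{\rm new}(t) = t \cdot h(t^{1/e})$ for an honest real analytic $h$ and positive integer $e$. The key observation is that the first-coordinate monomial $\alpha(\tilde u \tilde v)^l$ forces the substitution $w = (\tilde u \tilde v)^l$, and the resulting $\phi_{\rm new}(w) = w \cdot \phi(w^{1/l})$, despite the apparent fractional exponent $1/l$, does sit in the prescribed class with $e = l$; the divisibility $e \mid \gcd(l', m')$ is then preserved under every subsequent blowing-up, closing the induction.
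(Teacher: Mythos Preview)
Your proof is correct and follows essentially the same route as the paper: induction on the number of point blowings-up, verifying the local normal forms in each standard chart of a single additional blow-up. You are in fact more explicit than the paper about the shape of $\phi$ in case (ii) and the auxiliary invariant $e \mid \gcd(l,m)$; the one sub-case you leave implicit---the smooth points of the new exceptional component $D$ in Case~B, where form (i) must be recovered from the corner expression by recentering at $y=A\neq 0$---is handled in the paper by the same translation trick you already describe in Case~A.
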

\begin{proof}
By induction on the number of blowings-up.
The induction step is true when $(M,E) = ((\R^2,\oo),\emptyset)$.
It is thus sufficient to show that properties (i) and (ii) are preserved under point blowings-up.

\smallskip
Let $\bp$ be any point of the exceptional curve $E$. 
Let $\gamma:(M_1,E_1) \to (M,E)$ be the corresponding blowing-down mapping. Let $D:=\gamma^{-1}(\bp)$ 
be the new component of the normal crossing curve $E_1$, and we keep denoting $E$ for 
the strict transform of $E$ by $\gamma$.

\smallskip 
First. Assume that the center $\bp$ is a regular point of $E$.  
Suppose we are given coordinates $(u,v)$ centered at $\bp$ such that point (i) holds true.

\smallskip
In the chart $(x,y) \to (x,xy)$, the exceptional curve $D$ has equation $\{x=0\}$. 
We just write the mapping $\pi\circ\gamma$ as 
\begin{equation}
(x,y) \to (\alpha x^a,x^a \phi(x) + \beta x^cxy) = (\alpha x^a,x^a \phi(x) + \beta x^{c+1}y),
\end{equation}
thus in the form of point (i).

\smallskip
In the chart $(x,y) \to (xy,y)$, the exceptional curve $D$ has equation $\{y=0\}$. 
The mapping $\pi$ is written as 
\begin{equation}
(x,y) \to (\alpha x^ay^a,x^ay^a\phi(xy) + \beta x^cy^{c+1}) 
\end{equation}
At the point $D\cap E$, we find $x=0$.  Thus we obtain a local expression of $\pi$ of type (ii).
\\
At a point $\bp$ of $D\setminus E$  we have $x=A\neq 0$. We assume for simplicity $A=1$
(in order to avoid discussing on the parity of $a$ and $c$). The cases $A$ positive and $A$
negative are dealt with in a very similar way. Let $x := 1+ w$  and $y = (1+w)^{-1}z$. This an isomorphism in a 
neighborhood of the point $\bp$. Note that $(D,\bp) =\{z=0\}$. The mapping $\pi\circ\gamma$ is written as:
\begin{equation}
(w,z) \to (\alpha z^a,z^a \phi(z) + \beta (1+w)^{-1}z^{c+1}).
\end{equation} 
For $|w|<1$, writing $(1+w)^{-1} = 1 + w'$, we see that $(w',z)$ are also coordinates centered at $\bp$. 
We deduce that the mapping $\pi \circ \gamma$ can be written as:
\begin{equation}
(w',z) \to (\alpha z^a,z^a \theta(z) + \beta z^{c+1}w'),
\end{equation} 
where the function $\theta$ is real analytic.
\\
The case of blowing-up a regular point of $E$ has thus been dealt with.

\smallskip
Second. Assume that center $\bp$ of the blowing-up is a corner point of $E$.
We use the same notations as in the non corner case. There exist adapted 
coordinates centered at $\bp$ such that the mapping $\pi$ in coordinates is:
\begin{equation}
(u,v) \to (\alpha u^av^b,\phi(u^av^b) + \beta u^cv^d)
\end{equation}

In the chart $(x,y) \to (x,xy)$ the composed mapping $\pi \circ \gamma$ becomes:
\begin{equation}\label{eq:map-3}
(x,y) \to (\alpha x^{a+b}y^b,\phi(x^{a+b}y^b) + \beta x^{d+d}y^d).
\end{equation}
We observe that the plane vectors $(a+b,b)$ and $(m+n,n)$ are still linearly independent.
\\
At the point $D\cap E$, property (ii) is already satisfied as can be seen in Equation (\ref{eq:map-3}). 
\\
At a point of $D\setminus E$, we know that $y=A \neq 0$. And we proceed as in the second part of the 
first case above.

\smallskip
The proof of the Lemma ends just saying that the blowing-up 
mapping of the origin gives rise to a blowing-down mapping with 
local expression of the form $(x,y) \to (x,xy)$ or $(x,y) \to (xy,y)$, that is of type (i).
\end{proof}
\begin{remark}\label{rmk:HP-exp}
1) The result is clearly true in the complex case as well.

2) The pair of integers $(l,m)$ at a regular point of $E$ is constant along the component of $E$ containing
the point, since they are obtained via $I_\pi$ and $J_\pi$ both principal and monomial in $E$. 

At a corner point the numbers $l,m,n,p$ are just from the pairs $(l,n)$ and $(n,p)$ coming from 
each component through the corner point.
\end{remark} 
In points $(i)$ and $(ii)$ of Lemma \ref{lem:res-1}, the local coordinates $(u,v)$ at $\ula$
are called \em Hsiang \& Pati coordinates at $\ula$ \em (see \cite{HP,Gri1,BBGM,Gra1}).
Following Remark \ref{rmk:HP-exp} we introduce the following
\begin{definition}
Let $H$ be a component of $E$. The pair of integers $(l,m)$ appearing in the local
expression of $\pi$ at (regular) points of $H$ is called the \em Hsiang \& Pati local data of $\pi$ along $H$ \em
(\em Hsiang \& Pati local data \em for short). The union of all the Hsiang \& Pati local data is called 
\em Hsiang \& Pati data of $\pi$ \em (\em Hsiang \& Pati data \em for short).
\end{definition}
Hsiang \& Pati data allows to describe, up to local quasi-isometry (see also \cite{HP,Gri1,BBGM}) 
the local form at any point point of the exceptional locus $E$ of the pull-back of the Euclidean metric 
by the "resolution mapping $\pi$". 

%
%
\begin{corollary}\label{cor:HP}
Let $h:(\R^2,\oo) \to (\R^2,\oo)$ be a cascade blow-analytic homeomorphism
\[
\begin{array}{lll}
(M,E) & \stackrel{\pi'}{\longrightarrow} & (\R^2,\oo) \\
\,\,\,\,\, \Phi \, \downarrow &  & \;\; h \downarrow \\
(M',E') & \stackrel{\pi}{\longrightarrow} & (\R^2,\oo) \\
\end{array}
\]

\smallskip
We find $\Phi^*I_\pi = I_{\pi'}$ and $\Phi^*J_\pi = J_{\pi'}$.

\smallskip
In particular, the Hsiang \& Pati local data 
of $\pi$ along any component $H$ of $E$ and the Hsiang \& Pati local data
of $\pi'$ along $\Phi(H)$ are equal.  
\end{corollary}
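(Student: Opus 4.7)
The plan is to reduce the ideal equalities $\Phi^* I_\pi = I_{\pi'}$ and $\Phi^* J_\pi = J_{\pi'}$ to the equality of Hsiang \& Pati local data along corresponding components of $E$ and $E'$, and to derive the HP data equality from the cascade structure supplied by Theorem \ref{thm:CBA=BA}.

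First I would verify that $\Phi$ carries each component $D_j \subset E$ (the one introduced at step $j$ of the cascade) onto $D_j' \subset E'$: Definition \ref{def:casBAE}(iii) already forces $h_{i-1}(p_{i-1}) = p_{i-1}'$ (as pointed out right after the definition), and then the commutativity $h_{i-1}\circ\pi_i = \pi_i'\circ h_i$ yields $h_i(D_i) = h_i(\pi_i^{-1}(p_{i-1})) = \pi_i'^{-1}(p_{i-1}') = D_i'$; the older components $D_j$ with $j < i$ are handled by the same commutativity together with the inductive hypothesis.

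Because $I_\pi$ and $J_\pi$ are principal monomial in $E'$, their $\Phi$-pullbacks are principal monomial in $\Phi^{-1}(E') = E$; the same holds for $I_{\pi'}$ and $J_{\pi'}$ in $E$ by the folklore recalled before Lemma \ref{lem:res-1}. Two such ideals on $M$ coincide iff their exponents along every component agree, so it suffices to work at a smooth point $\ula$ of each component $H \subset E$. In HP coordinates $(u,v)$ at $\ula$, Lemma \ref{lem:res-1}(i) gives $I_{\pi',\ula} = (u^l)$ and $J_{\pi',\ula} = (u^{l+m-1})$, while in HP coordinates $(u',v')$ at $\ula' = \Phi(\ula)$ it gives $I_{\pi,\ula'} = (u'^{l'})$ and $J_{\pi,\ula'} = (u'^{l'+m'-1})$. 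Since $\Phi$ sends $\{u=0\}$ onto $\{u'=0\}$, we have $\Phi^* u' = u\cdot V$ for a local unit $V$, whence $\Phi^* I_{\pi,\ula} = (u^{l'})$ and $\Phi^* J_{\pi,\ula} = (u^{l'+m'-1})$. An analogous check at corner points completes the reduction to the HP data equality $(l,m) = (l',m')$.

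The HP data along a component $D_j$ is determined combinatorially by the sequence of blowings-up subsequent to its creation: Lemma \ref{lem:res-1} provides explicit update rules for $(l,m)$ under each new blowing-up which depend only on the combinatorial position of the new center $p_{i-1}$ (smooth point of a given component, corner of two components, or off the current exceptional divisor). Together with the component correspondence $h_i(D_j) = D_j'$ and the cascade condition $h_{i-1}(p_{i-1}) = p_{i-1}'$, this forces the combinatorial types of the two cascades to coincide step by step, so the HP data evolve identically on the $M$- and $M'$-sides. The principal obstacle is in spelling out the update rules for $(l,m)$ under every type of additional blowing-up; these computations are essentially those already carried out in the inductive proof of Lemma \ref{lem:res-1}, so the argument requires no new ideas beyond careful bookkeeping.
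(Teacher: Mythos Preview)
Your proposal is correct and is essentially a careful unpacking of what the paper dismisses in one line as ``straightforward from the definitions of cascade blow-analytic homeomorphisms and of $I_\pi,J_\pi,I_{\pi'}$ and $J_{\pi'}$.'' The paper's intended argument is exactly yours: the cascade structure forces the two blow-up sequences to have identical combinatorics (same incidence of centers with exceptional components), and since the multiplicities defining $I_\pi$ and $J_\pi$ along each component are determined purely by this combinatorics (via the update rules implicit in the inductive proof of Lemma~\ref{lem:res-1}), the Hsiang \& Pati data---and hence the ideals---match under $\Phi$.
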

\begin{proof}
It is straightforward from the definitions of cascade blow-analytic homeomorphisms and 
of $I_\pi,J_\pi,I_{\pi'}$ and $J_{\pi'}$.
\end{proof}
The next result is a consequence of Corollary \ref{cor:HP} and presents some 
metric properties of germs of blow-analytic homeomorphisms of the plane.
%
%
\begin{proposition}[see also \cite{KoPa1}]\label{prop:BAmetric}
Let $h:(\R^2,\oo) \to (\R^2,\oo)$ be a germ of blow-analytic homeomorphism.

\smallskip\noindent
(1) The order of contact between half-branches is preserved by $h$.  
\\
(2) There exists constants $0<A<B$ such that for $r$ positive and small enough 
\begin{center}
$A|x| \leq |h(x)| \leq B|x|$, once $|x|\leq r$.
\end{center}
\end{proposition}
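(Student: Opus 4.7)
The plan is to reduce to a cascade blow-analytic representative of $h$ via Theorem \ref{thm:CBA=BA} and then apply Corollary \ref{cor:HP}: the induced analytic isomorphism $\Phi:(M,E)\to(M',E')$ satisfies $\Phi^*I_{\pi'}=I_\pi$ and preserves the Hsiang--Pati local data component-by-component.

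For part $(2)$, the first step is to observe that in every Hsiang--Pati chart around a point $\ula\in E$ the Euclidean norm $|\pi(p)|$ is equivalent, up to positive multiplicative constants on a relatively compact neighborhood of $\ula$, to the absolute value of the monomial generator of $I_{\pi,\ula}$. This is an immediate inspection of the normal forms (\ref{eq:res-map-1}) and (\ref{eq:res-map-2}): at a smooth point of $E$ the first coordinate of $\pi$ equals $\pm u^l$ while the second is $O(|u|^l)$ since $m\geq l$, hence $|\pi(p)|\sim |u|^l$; at a corner point both coordinates are $O(|u|^l|v|^m)$ with the first equal to $\pm u^lv^m$. By compactness of $\pi^{-1}(\overline{B(\oo,r_0)})$, finitely many Hsiang--Pati charts suffice, and the analogous statement holds on $M'$. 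Corollary \ref{cor:HP} forces the monomial generators at $\ula$ and $\Phi(\ula)$ to have equal exponents, while $\Phi$ being an analytic isomorphism preserving $E$ componentwise implies that the corresponding coordinate functions pull back to one another up to analytic units. Combining these facts with $h\circ\pi=\pi'\circ\Phi$ yields the desired uniform two-sided bound $A|x|\leq|h(x)|\leq B|x|$.

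For part $(1)$, let $\cC_1,\cC_2$ be real-analytic half-branches at $\oo$, with strict transforms $\widetilde\cC_i$ meeting $E$ at points $\ula_i$. My strategy is to choose Hsiang--Pati coordinates around each $\ula_i$, write an analytic parametrization of $\widetilde\cC_i$ in these coordinates, and substitute into the normal forms (\ref{eq:res-map-1})--(\ref{eq:res-map-2}) to obtain explicit Puiseux parametrizations $\gamma_i(t)$ of $\cC_i$. A direct expansion of $|\gamma_1(t)-\gamma_2(t)|$ then reveals that its leading rational exponent $\beta$ is determined by the Hsiang--Pati exponents at $\ula_i$, by whether $\ula_1$ and $\ula_2$ coincide, and by analytic orders of vanishing of the expressions appearing in the parametrizations of $\widetilde\cC_i$. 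Since $\Phi$ carries each $\widetilde\cC_i$ analytically to a real-analytic half-branch at $\Phi(\ula_i)$ and preserves the Hsiang--Pati data by Corollary \ref{cor:HP}, the analogous computation on the $M'$-side returns the same rational number $\beta$ for the pair $h(\cC_1), h(\cC_2)$.

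The main technical obstacle is the case analysis in $(1)$: the lifts may land at regular or corner exceptional points, may be transverse or tangent to the components of $E$, and may share a common base point with further internal tangencies. In each configuration one must verify that the leading Puiseux exponent of $|\gamma_1(t)-\gamma_2(t)|$ is indeed expressible purely through Hsiang--Pati exponents and analytic invariants transported by $\Phi$. The cascade structure furnished by Theorem \ref{thm:CBA=BA} provides the flexibility to refine both $\pi$ and $\pi'$ by further compatible point blow-ups, reducing to the clean generic situation in which $\widetilde\cC_1$ and $\widetilde\cC_2$ are transverse to $E$ at distinct regular exceptional points; from there the conclusion is read off directly from the normal form (\ref{eq:res-map-1}).
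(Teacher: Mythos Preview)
Your overall architecture coincides with the paper's: reduce to a cascade representative, invoke Corollary~\ref{cor:HP} so that $\Phi$ transports the Hsiang--Pati data, and then read everything off in Hsiang--Pati charts. For part~(2) your argument is essentially the paper's argument verbatim: in each chart $|\pi|$ is comparable to the monomial generator of $I_{\pi,\ula}$, the exponents match under $\Phi$, and compactness of $E$ gives global constants.

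For part~(1) the paper and you diverge in emphasis. The paper keeps $\pi,\pi'$ fixed and carries out a full case analysis (its Claims~i--iv) computing the contact order explicitly in every configuration: same component at distinct points, adjacent components, same base point (regular or corner), and non-adjacent components joined by a chain. You instead propose to blow up further on both sides (compatibly via $\Phi$) until $\widetilde\cC_1,\widetilde\cC_2$ are transverse to $E$ at distinct regular points, and then appeal to~(\ref{eq:res-map-1}). The reduction step is legitimate: one can indeed separate two analytic half-branches and make each transverse at a smooth exceptional point by finitely many point blow-ups, and the cascade isomorphism extends across matched blow-ups (the paper uses exactly this extension manoeuvre in Section~\ref{section:main}).

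There is, however, a genuine gap in your last sentence. After the reduction the two regular points $\ula_1,\ula_2$ need not lie on the \emph{same} irreducible component of $E$; in general they will sit on distinct components connected by a chain in the dual graph (e.g.\ the half-branches $(t^2,0)$ and $(t^2,t^3)$ already land on different components after the minimal separating tower). In that situation a single chart with normal form~(\ref{eq:res-map-1}) cannot see both arcs, so the contact order cannot be ``read off directly'' from~(\ref{eq:res-map-1}) alone. You still need precisely what the paper supplies in its Claims~i, ii and iv: within one component the contact is the Hsiang--Pati ratio $m/l$, and across a chain of components the contact is the minimum of these ratios, obtained by interposing auxiliary arcs through the intermediate corners and using the non-archimedean behaviour of contact order. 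Your blow-up reduction buys you the elimination of Claim~iii (coincident base points), but not of the chain argument. Once you add that step, your proof is complete and somewhat more streamlined than the paper's, at the cost of invoking the extension of $\Phi$ across further blow-ups.
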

\begin{proof} The Hsiang \& Pati local data of $\pi$ is used to compute explicitly orders of contact.

\medskip\noindent
$\bullet$ \em Point 1). \em
Let $\gm_1$ and $\gm_2$ be two disjoint arcs at the origin of $\R^2$. 
Let $\Gm_i$ be the strict transform (of the image) of $\gm_i$ by $\pi$.

\smallskip\noindent
{\bf Claim i).} \em Assume that $\Gm_1$ and $\Gm_2$ intersect a same component $H$ of $E$ at two distinct points, $\ula_1$ 
and $\ula_2$. Let $(l,m)$ be the Hsiang \& Pati local data of $\pi$ along $H$. 
The order of contact between $\Gm_1$ and $\Gm_2$ is $\frac{m}{l}$. \em

\smallskip\noindent
{\bf Proof of Claim i).} 
Let $\ulb_1,\ldots,\ulb_s$ be all the corner points of $E$ lying in $H$ indexed in such a way, 
after having chosen an orientation on $H$, that $\ulb_i$ is the successor of $\ulb_{i-1}$ for $i=2,\ldots,s$. 
Let $I_i$ be the "open interval" $]\ulb_i,\ulb_{i+1}[$.

Assume that $\ula_1,\ula_2 \in I_i$ for some $i$. Let $(u,v)$ be local coordinates adapted to $H$.
We can parameterize the half-branches $\Gm_i$ as $c_i:u \to (u,v_i + V_i(u))$ where $V_i$ is 
a Puiseux Series vanishing at $u=0$ and $v_1\neq v_2$.
We find
\begin{center}
$
\begin{array}{rcl}
\pi(c_i(u)) & = & (\alpha u^l, u^l\phi(u) +\beta u^m(v_i+V_i(u)) \\
&  = & (\alpha t,tg(t) +\beta t^\frac{m}{l}(v_i + W_i(t))
\end{array}
$
\end{center}
where $W_i$ is a Puiseux series vanishing at $t=0$. Thus the order of contact between $\Gm_1$ and $\Gm_2$ is $\frac{l}{m}$.

\smallskip
Assume now that $\ula_1 = \ulb_i$ and $\ula_2, \in I_{i-1}\cup I_i$.
Let $(u,v)$ be local coordinates at $\ulb_i$ such that $\ula_2$ also lies in the domain of the chart.
Let $c_i:u\to (u,v_i + V_i(u))$ be a local parameterization of $\Gm_1$ and $\Gm_2$ where $V_i$ is a 
Puiseux series vanishing at $u=0$ and $v_1 = 0 \neq v_2$.
We see 
\begin{center}
$
\begin{array}{rcl}
\pi(c_i(u)) & = &(\alpha u^l(v_i + V_i)^b,u^l(v_i+V_i)^m g(u,v) + \beta u^n (v_i + V_i)^p) \\
& = & (\alpha t,t g(t) + \beta t^\frac{n}{l} (v_i + V_i)^{p-\frac{mn}{l}}).
\end{array}
$
\end{center}
Thus the order of contact is again $\frac{m}{l}$.

\smallskip
If $\ula_1 \in [\ulb_i,\ulb_{i+1}]$ and $\ula_2\in [\ula_j,\ula_{j+1}]$ with $|i-j|\geq 2$,
the proof is deduced from the cases above using intermediary real analytic arcs $C_k$ with $C_k \cap H = \ula_k \in I_k$ for 
$i<k<j$ (or $j<k<i$) to get the order of contact between $\Gm_1$ and $\Gm_2$.

\smallskip\noindent
{\bf Claim ii).} \em 
Assume that $\Gm_1 \cap E = \{\ula_1\} \in H_1$ and $\Gm_2\cap E = \{\ula_2\}\in H_2$ 
such that $\ula_1 \neq \ula_2$ and $H_1 \cap H_2 =\{\ula\}$. Let $(l,n)$ and $(m,p)$ 
be the Hsiang \& Pati local data of $\pi$ along $H_1$ and respectively along $H_2$.
The order of contact of $\Gm_1$ and $\Gm_2$ is $\min(\frac{n}{l},\frac{p}{l})$. \em

\smallskip\noindent
{\bf Proof of Claim ii).} We can assume that $\Gm_1$ and $\Gm_2$ are contained in a chart centered at the corner point 
$\ula$. Let $C$ be a real analytic half-branch at the origin $\oo$ of $\R^2$ such that its strict transform $C'$ by $\pi$ 
intersects with $E$ at $\ula$.
The contact between $\Gm_1$ and $\Gm_2$ is the minimum of the orders of contact between $\Gm_1$ and $C'$ and 
between $\Gm_2$ and $C'$. In other words, using part i)
it is $\min(\frac{n}{l},\frac{p}{n})$. 

\smallskip\noindent
{\bf Claim iii).} \em Suppose $\Gm_1\cap E = \Gm_2\cap E = \{\ula\}$.
\\
- If $\ula$ is not a corner point there are local coordinates $(u,v)$ at $\ula$ 
such that $(E,\ula) = \{u=0\}$. 
Let $c_i :u\to (u,u^{e_i}A_i)$ be parameterizations of the strict transforms 
of $\Gm_i$ with $e_i \in \Q_{>0}$ and $A_i$ is an invertible Puiseux series if not identically $0$, for $i=1,2$. 
Let $u^{e_1}A_1 - u^{e_2}A_2 = u^e A$ for an invertible Puiseux unit $A$.
The order of contact between $\Gm_1$ and $\Gm_2$ is $\frac{m+e}{l}$.
\\
- If $\ula$ is a corner point, there are local coordinates $(u,v)$ such that $(E,\ula) = \{uv=0\}$.
Let $c_i :u\to (u,u^{e_i}A_i)$ be parameterizations of the strict transforms 
of $\Gm_i$ with $e_i \in \Q_{>0}$ and $A_i$ is an invertible Puiseux series if not identically $0$, for $i=1,2$. 
Let $u^{e_1}A_1 - u^{e_2}A_2 = u^e A$ for an invertible Puiseux unit $A$.
The order of contact between $\Gm_1$ and $\Gm_2$ is 
\begin{center}
$\frac{n+p\cdot\min(e_1,e_2) + e - \min(e_1,e_2)}{l+m\cdot\min(e_1,e_2)}$.
\end{center}
\em

\smallskip\noindent
{\bf Proof of Claim iii).} 
Assume $\ula$ is not a corner point. 
Let $u^{e_1}A_1 - u^{e_2}A_2 = u^e A$ with $e\in \Q_{>0}$ and $A$ is an invertible Puiseux series.
We have $\pi(c_i(u)) = (\alpha u^l,u^lg(u) +\beta u^l\cdot u^{e_i} A_i (u)) = (\alpha t,tg(t) +\beta t^\frac{m+e}{l} B(t))$ 
where $B$ is an invertible Puiseux series. Thus the contact is $\frac{m+e}{l}$.

\smallskip\noindent
Assume that $\ula$ is a corner point. Assume that $e_1\leq e_2$.
Then 
\begin{center}
$
\begin{array}{rcl}
\pi(c_1(u)) & = &(\alpha u^{l+me_1}M^m,u^{l+me_1}M^m A(u) +\beta u^{n+pe_1}M^p) \\
& = & (\alpha t,tg(t) + \beta t^\frac{n+pe_1}{l+me_1}M^{p-\frac{mn}{l}}) \\
\pi(c_2(u)) & = & (\alpha u^{l+me_1}N^m,u^{l+me_2}N^m(u)B(u) + \beta u^{n+pe_1}N^p) \\
& = & (\alpha t,tg(t) + \beta t^\frac{n+pe_2}{l+me_2}N^{p-\frac{mn}{l}}).
\end{array}
$
\end{center}
If $e = e_1$ then the contact is $\frac{n+pe}{l+me}$
\\
If $e > e_1=e_2$ then 
\begin{center}
$u^{e_1}M - u^{e_1}N = u^e P = t^\frac{e_1}{l+me_1} [M(t(u)) - N(t(u)] = t^\frac{e}{l+me_1} P(t(u))$
\end{center}
so that the contact is $\frac{n+pe_1 + e - e_1}{l+me_1}$.  

\smallskip\noindent
{\bf Claim iv).} \em Suppose that $\Gm_1 \cap E = \ula_1 \in H_1$ and $\Gm_2\cap E = \ula_2 \in H_2$
with $H_1\cap H_2 = \emptyset$. There is a unique chain $D_0,\ldots,D_s$ of distinct components 
of $E$ such that $D_0:= H_1$, $D_s:= H_2$ and $D_i\cap D_j = \emptyset$ if $0 \geq i \geq j-2 \geq s$
and $D_i\cap D_{i+1}$ consists in a single corner point. Let $(l_i,m_i)$ be the Hsiang \& Pati local data of $\pi$ along 
$D_i$. The contact between $\Gm_1$ and $\Gm_2$ is 
\begin{center}
$\min_{i=0,\ldots,s}\frac{m_i}{l_i}$. 
\end{center}
\em 

\smallskip\noindent
{\bf Proof of Claim iv).} This is similar to the case ii).

\smallskip\noindent
Following Corollary \ref{cor:HP} and the expressions of the order of contact given above,
the order of contact between two half branches $C_1$ and $C_2$ is 
equal to the order of contact of the half-branches $h(C_1)$ and $h(C_2)$. 

\medskip\noindent
$\bullet$ \em Point 2). \em
Let $C$ be a half-branch at the origin $\oo$. 
Let $\ula \in E$ be the intersection point of the strict transform $C'$ of $C$ by $\pi$ 
with $E$. Let $(u,v)$ be local coordinates at $\ula$ adapted to $E$ so that 
$\{u=0\} \subset(E,\ula) \subset \{u\cdot v=0\}$.

If $\ula$ is a regular point of $E$, let $(l,m)$ be the Hsiang \& Pati local data of  
$\pi$ at $\ula$, so that $|\pi(u,v)| = |u|^l\cdot A(u,v)$ with $A(0,0) >0$.
  
If it is a corner point, the Hsiang \& Pati local data of $\pi$ at $\ula$ consists of two pairs $(l,n)$ 
and $(m,p)$ and we get $|\pi(u,v)| = |u|^l|v|^m\cdot A(u,v)$ with $A(0,0) >0$.

\smallskip
Suppose $\ula$ is a corner point. Up to permuting $u$ and $v$, let $c:t \to (\pm t,t^r\phi(t)$, with $t\in \R_{\geq 0}$, 
be a Puiseux parameterization of $C'$ such that $\phi$ is an invertible Puiseux series
and $r\in \Qbar_{\geq 1}$. Thus 
\begin{center}
$|\pi(c(t))| = t^{l+mr}A(t)$ for a Puiseux series $A$ with $A(0) >0$,
\end{center}
and 
\begin{center}
$|\pi'(\Phi(c(t)))| = t^{l+mr}B(t)$ for a Puiseux series $B$ with $B(0) >0$.
\end{center}
Similar but simpler computations occur at a non corner point.

\smallskip
The above computation says that $L|x| \leq |h(x)| \leq  K|x|$ in the image of the considered coordinates 
chart $(u,v)$, which is a finite union of H\"older triangles and $0<L<K$.   
By compactness, the exceptional curve $E$ is covered by finitely many such charts. Positive constants 
$K,L$ can be found such that such an inequality of the desired type holds true in a neighborhood of the origin.
\end{proof}
\begin{remark}
Point 1) implies than any blow-analytic image of a H\"older triangle is a H\"older triangle
with equal exponent.
\end{remark}
%
%
%
%
%
%
%
%
%
%
%
%
%
%
%
%
%
%
%
%
%
%
%
%
%
%
%
%
%
%
%
%
%
\section{Proof of the main result}\label{section:main}
This section is devoted to proof the main result of this note, namely 
\begin{theorem}\label{thm:main}
Blow analytic equivalent real analytic plane function germs are sub-analytically bi-Lipschitz
contact equivalent.
\end{theorem}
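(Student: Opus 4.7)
The plan is to apply the criterion of Proposition \ref{prop:criterion} directly, using $h$ itself as the sub-analytic homeomorphism: if $f,g \in \cO_2$ satisfy $f = g\circ h$ for a blow-analytic $h$, I want to show $\nu_f(\gm) = \nu_g(h_*\gm)$ and $\mu_f(\gm) = \mu_g(h_*\gm)$ for every arc $\gm$ at $\oo$. The building blocks are Proposition \ref{prop:BAmetric} and the observation (implicit in the cascade diagram of Theorem \ref{thm:CBA=BA}) that $h$ carries real analytic half-branches to real analytic half-branches: the strict transform $C'$ of a real analytic half-branch $C$ meets $E$ at a single point $\ula$; its image $\Phi(C')$ is a real analytic curve in $M'$ meeting $E'$ transversely at $\Phi(\ula)$; and $\pi'(\Phi(C')) = h(C)$ is then a real analytic half-branch at $\oo$. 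Hence $h_*\gm$ is a well-defined arc.

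For condition (i), the identity $f = g\circ h$ gives $f(\gm(t)) = g(h(\gm(t)))$, and by part (2) of Proposition \ref{prop:BAmetric}, the sub-analytic one-variable function $t \mapsto |h(\gm(t))|$ satisfies $At \leq |h(\gm(t))| \leq Bt$. Being sub-analytic and sandwiched between linear functions, its Puiseux expansion must begin with a term of order exactly $1$, say $|h(\gm(t))| = ct(1+o(1))$ with $c>0$. Since $h_*\gm(s)$ is obtained from $h\circ\gm$ by reparametrizing with $s = |h\circ\gm(t)|$, the inverse substitution $t = s/c + o(s)$ turns $f(\gm(t)) = t^\nu (A+o(1))$ into $g(h_*\gm(s)) = s^\nu (A c^{-\nu} + o(1))$, proving $\nu_f(\gm) = \nu_g(h_*\gm)$.

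For condition (ii), I first note that $h^{-1}$ is itself blow-analytic (invert $\Phi$ in the cascade diagram), so the argument in the previous paragraph applies symmetrically. By part (1) of Proposition \ref{prop:BAmetric} together with the Remark following it, $h$ sends any H\"older triangle $T$ to a H\"older triangle $h(T)$ of the same exponent, and every arc $\delta$ in $h(T)$ is of the form $h_*\gm'$ for some arc $\gm' \subset T$. The symmetric normalized order preservation then shows: $\nu_f$ is constant equal to $\nu_f(\gm)$ on arcs in $T$ if and only if $\nu_g$ is constant equal to $\nu_g(h_*\gm)$ on arcs in $h(T)$. Taking the infimum over such triangles yields $\mu_f(\gm) = \mu_g(h_*\gm)$. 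Proposition \ref{prop:criterion} now delivers Theorem \ref{thm:main}.

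The main obstacle I anticipate is rigorously justifying the first step of the argument, namely that blow-analytic maps preserve the class of real analytic half-branches (so that $h_*\gm$ is an arc in the strict sense of the paper, rather than merely a sub-analytic half-branch). This requires verifying that $\pi'|_{\Phi(C')}$ is an analytic isomorphism onto its image, which uses that $\Phi(C')$ meets $E'$ transversely at exactly one point, inherited from the fact that $C'$ meets $E$ transversely at one point and that $\Phi$ is an analytic isomorphism preserving the divisor structure (condition (iii) of Definition \ref{def:casBAE}). A secondary subtlety is the careful bookkeeping of Puiseux leading coefficients in condition (i): the weak Lipschitz bound alone is not enough, but combined with sub-analyticity of $t \mapsto |h(\gm(t))|$ the leading exponent $1$ is forced and the constant $c > 0$ exists.
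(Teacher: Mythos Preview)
Your proof is correct and follows the same overall architecture as the paper: verify conditions (i) and (ii) of Proposition~\ref{prop:criterion} for the blow-analytic homeomorphism $h$ itself, using that $h$ preserves H\"older-triangle exponents and normalized orders. Your argument for condition (ii) is exactly the paper's (the sentence preceding the proof of Proposition~\ref{prop:BA-pizza}).

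For condition (i), however, you take a more elementary route than the paper. The paper establishes $\nu_f(\gm)=\nu_g(h_*\gm)$ by first computing $\nu_f(\gm)$ explicitly as a rational function of the resolution data of $f$ and the Hsiang \& Pati local data of $\pi_f$ at the point where the strict transform of $\gm$ meets $E_f$ (Lemma~\ref{lem:order-arc}), and then invoking Corollary~\ref{cor:HP} to see that $\Phi$ transports all of these integers to the corresponding data for $g$ and $\pi_g$ (Proposition~\ref{prop:main}). You bypass this entirely: you use only the relation $f=g\circ h$ together with the metric bound of Proposition~\ref{prop:BAmetric}(2) and sub-analyticity of $t\mapsto|h(\gm(t))|$ to force the reparametrization $s=|h(\gm(t))|$ to have leading exponent~$1$, whence the Puiseux order of $g\circ h_*\gm$ equals that of $f\circ\gm$. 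This is shorter and needs none of the local normal forms of Section~\ref{section:HP} beyond what already went into Proposition~\ref{prop:BAmetric}. What the paper's route buys in exchange is an explicit formula for $\nu_f(\gm)$ in terms of combinatorial data of the resolution, which is what feeds into the pizza construction sketched in Section~\ref{section:app} and into Proposition~\ref{prop:BA-pizza} as stated (that $h$ maps a pizza of $f$ to an equivalent pizza of $g$).

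One small correction to your first paragraph: the strict transform $C'$ of an arbitrary real analytic half-branch need not meet $E$ \emph{transversely}, since $\pi$ is not assumed to resolve $C$. But transversality is not needed; the lift $\tilde c$ of a primitive analytic parametrization $c$ of $C$ through $\pi$ is still real analytic (induction on the number of blowings-up), so $\pi'\circ\Phi\circ\tilde c$ is a real analytic parametrization of $h(C)$, and $h_*\gm$ is an arc as required.
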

Let $f,g \in \cO_2$ two blow-analytic equivalent function germs. 
What we exactly show in this section is the following result which, when combined 
with the results of \cite{BFGG} about \em abstract pizzas, \em will yield Theorem \ref{thm:main}.
\begin{proposition}\label{prop:BA-pizza}
Let $h$ be the blow analytic isomorphism between $f$ and $g$, say $f=g\circ h$. 
Thus the homeomorphism $h$ maps any pizza of $f$ onto an equivalent pizza of $g$ (although
$h$ may never realize any equivalence between any two pizzas).
\end{proposition}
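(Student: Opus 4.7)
The plan is to verify that the blow-analytic isomorphism $h$ preserves the two arc-level invariants that control a pizza --- the normalized order $\nu_f$ and the width $\mu_f$ --- and that it carries H\"older triangles to H\"older triangles of the same exponent. Once these arc-level facts are in place, the pizza statement follows by bookkeeping, combined with Proposition~\ref{prop:criterion} and the abstract pizza machinery of~\cite{BFGG}. By Theorem~\ref{thm:CBA=BA} I may assume $h$ is cascade blow-analytic, so that Corollary~\ref{cor:HP} and Proposition~\ref{prop:BAmetric} apply; inverting $\Phi$ in the diagram shows $h^{-1}$ is blow-analytic as well, so $h$ and $h^{-1}$ both send real analytic half-branches to real analytic half-branches.

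First I would prove that $\nu_f(\gm)=\nu_g(h_*\gm)$ for every arc $\gm$. Parameterize $\gm$ so that $|\gm(t)|=t$. By Proposition~\ref{prop:BAmetric}(2) the sub-analytic function $s(t):=|h(\gm(t))|$ satisfies $At\le s(t)\le Bt$, so its Puiseux expansion starts as $s(t)=\alpha t+\cdots$ with $\alpha>0$. Writing $h\circ\gm(t)=(h_*\gm)(s(t))$ and using $f=g\circ h$, the identity $f(\gm(t))=g((h_*\gm)(s(t)))$ rewrites as $t^{\nu_f(\gm)}(A+\psi(t))=s(t)^{\nu_g(h_*\gm)}(A'+\psi'(s(t)))$, and since $s(t)=\alpha t(1+o(1))$ the leading $t$-exponents on both sides must agree.

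Next I would show $\mu_f(\gm)=\mu_g(h_*\gm)$. By Proposition~\ref{prop:BAmetric}(1) and the Remark following it, $h$ carries any H\"older triangle to one with the same exponent. If $T$ is a H\"older triangle containing $\gm$ along which $\nu_f\equiv q$, then for any arc $\gm''\subset h(T)$ the half-branch $h^{-1}(\gm'')$ is real analytic and contained in $T$ (it is the $\pi$-image of the $\Phi^{-1}$-image of the real analytic strict transform of $\gm''$ by $\pi'$), so $\gm''=h_*\gm'$ for some arc $\gm'\subset T$. By the first step $\nu_g(\gm'')=\nu_f(\gm')=q$, hence $\nu_g$ is constant on $h(T)$ and $\mu_g(h_*\gm)\le \beta(h(T))=\beta(T)$. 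Taking the infimum over admissible $T$ and symmetrizing through $h^{-1}$ yields equality.

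Finally, a pizza of $f$ in the sense of~\cite{BFGG} is a finite cyclic decomposition of a neighborhood of $\oo$ into H\"older triangles whose boundary and interior asymptotic data are encoded by finitely many rational numbers extracted from $\nu_f$ and $\mu_f$. Applying $h$ to such a decomposition produces another cyclic decomposition whose triangles still carry the prescribed exponents, and whose $\nu_g$- and $\mu_g$-data coincide termwise with the original $\nu_f$- and $\mu_f$-data by the two preceding steps; this is precisely an equivalent pizza of $g$, whence Proposition~\ref{prop:BA-pizza}, and then Theorem~\ref{thm:main} via Proposition~\ref{prop:criterion}. The main obstacle I expect is the width step, and more specifically the identification of every arc in $h(T)$ with some $h_*\gm'$ for a real analytic $\gm'\subset T$: this uses crucially that $h$ lifts to a \emph{real analytic} isomorphism $\Phi$ between the resolved manifolds, so that real-analyticity --- not just sub-analyticity --- of half-branches is transferred in both directions, whereas a general sub-analytic homeomorphism would only preserve the sub-analytic class.
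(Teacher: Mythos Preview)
Your proof is correct and its overall architecture matches the paper's: verify the two hypotheses of Proposition~\ref{prop:criterion} for the blow-analytic homeomorphism $h$, then invoke the pizza machinery of~\cite{BFGG}. Where you genuinely diverge is in the derivation of $\nu_f(\gm)=\nu_g(h_*\gm)$. The paper obtains this identity by first computing $\nu_f(\gm)$ explicitly in resolved coordinates (Lemma~\ref{lem:order-arc}) as a rational function of the Hsiang~\&~Pati local data of $\pi_f$ and the local resolution data of $f$, and then using Corollary~\ref{cor:HP} to transport these integers through the analytic isomorphism $\Phi$ (Proposition~\ref{prop:main}). You bypass all of this with a soft Puiseux argument using only $f=g\circ h$ and the linear bound of Proposition~\ref{prop:BAmetric}(2); this is shorter and more conceptual, while the paper's route has the advantage of producing explicit formulas for $\nu_f$ in terms of the resolution combinatorics, which it exploits again in Section~\ref{section:app}. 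For condition~(ii) you actually spell out the width argument (H\"older triangles map to H\"older triangles of equal exponent, real-analyticity of arcs is preserved in both directions via $\Phi^{\pm1}$, hence constancy of $\nu$ on a triangle transfers), whereas the paper's proof of Proposition~\ref{prop:BA-pizza} only alludes to this step in the sentence preceding it. Your identification of the width step as the delicate point, and of the real-analytic lift $\Phi$ as the key ingredient, is exactly right.
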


A non constant function germ $f\in \cO_2$ is said \em resolved \em or \em monomialized \em 
if there exists a finite composition of points blowings-up $\pi:(M,E)\to (\R^2,\bo)$
such that (see \cite{Hir,BM1,BM2})
\\
- the co-support of the pull-back $I_f:=\pi^*((f))\subset I_\pi$ 
of the ideal $(f)$ is a normal crossing divisor $E\cup F$, where $F$ is the strict transform of 
$f^{-1}(0)$; 
\\
- the ideal $I_f$ is principal and monomial in $E\cup F$.

\smallskip
Let $f \in \cO_2$ and let $\pi_f:(M_f,\cE_f,E_f) \to (M,f^{-1}(0),\oo)$ be a (minimal) resolution
of $f$. Since the ideal $I_f = \pi_f^*((f))$ is principal and monomial in the normal crossing divisor 
$\cE_f$. Let $\ula\in E_f$ and let $(u,v)$ be local coordinates at $\ula$ adapted to $E_f$ and $\cE_f$, that 
is 
\begin{center}
$\{u=0\}\subset (E_f,\ula) \subset (\cE_f,\ula) \subset \{u\cdot v= 0\}$.
\end{center}

If $\ula$ is a regular point of $\cE_f$, we get $I_{f,\ula} = (u^r)$ for $r\in\N_{\geq m_f}$. 

If $\ula$ is a corner point of $\cE_f$, we have $I_{f,\ula} = (u^rv^s)$ for $r,s\in\N_{\geq m_f}$. 

\smallskip
The number $r$ depends only on the component of $E_f$ through the point $\ula$, while 
the number $s$ depends only on the component of $\cE_f$ through $\ula$.  
%
%
%
\begin{definition}
The multiplicity(ies) $r$ ($r,s$) is (are) called \em the local resolution data of $f$ at $\ula\in E_f$. \em
The (finite) collection of all these integer numbers is called \em the resolution data of $f$. \em 
\end{definition}
Let $f,g \in \cO_2$ be two blow-analytic equivalent function germs. Let $h$ be the corresponding 
blow-analytic homeomorphism obtained from the following cascade blow-analytic commutative diagram:
\[
\begin{array}{lll}
(M_f,\cE_f:=\pi_f^{-1}(F),E_f) & \stackrel{\pi_f}{\longrightarrow} & (\R^2,F:=f^{-1}(0),\oo) \\
 \;\;\;\;\;\;  \;\;\;\;\;\;  \;\;\;\; \Phi \, \downarrow &  & \;\;\;\;\;\;  \; \;\;\;\;\;\; h \downarrow \\
(M_g,\cE_g:=\pi_g^{-1}(G),E_g) & \stackrel{\pi_g}{\longrightarrow} & (\R^2,G:=g^{-1}(0),\oo) \\
\end{array}
\]
where $\cE_f := \{f\circ\pi_f= 0\}$ and $\cE_g:=\{g\circ\pi_g=0\}$.
Having $f=g\circ h$ is equivalent to have $f\circ \pi_f = g\circ\pi_g\circ\Phi$, so that 
$\Phi^*(\pi_g^*((g)))= \pi_f^*((f))$.

Any point blowing-up $\tau:(\widetilde{M},\wtE) \to (M_f,E_f)$ with center $\bp$ in $M_f$ 
leads to a point blowing-up $\tau':(\widetilde{M}',\wtE') \to (M_g,E_g)$  with center $\Phi(\bp)$, 
so that we can extend $\tau^*\Phi$ as an analytic isomorphism $(\widetilde{M},\wtE) \to(\widetilde{M}',\wtE')$.
This allows to further assume that $\pi_f$ is a (minimal) resolution of $f$ which (equivalently) implies that
$\pi_g$ is also a (minimal) resolution of $g$.
Under this additional property $\cE_f$ and $\cE_g$ are both normal crossing divisor.
 
\medskip
The proof of the main result starts with the following
\begin{lemma}\label{lem:order-arc}
Let $\gm$ be a real analytic arc along which $f$ does not vanish identically. 
Let $\Gm$ be the strict transform of (the image of) $\gm$ which intersects with $E_f$ at $\ula$. 
Let $(u,v)$ be local coordinates at $\ula$ adapted to $E_f$ and $\cE_f$.

\smallskip 
The normalized order $\nu_f(\gm)$ of $f$ along $\gm$ is a fraction whose numerator is affine with coefficients 
in the local resolution data of $f$ at $\ula$, the denominator is affine functions with coefficients in the Hsiang \& Pati 
local data of $\pi_f$ at $\ula$, and the variable is $p$ as in the parameterization $t \to (\pm t, t^p \theta(t))$ 
of $\Gm$, with $p \in \Q_{>0}$ and where $\theta$ is an invertible Puiseux series. 
\end{lemma}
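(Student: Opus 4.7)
The approach is a direct computation: substitute the Hsiang \& Pati normal form of $\pi_f$ at $\ula$ (Lemma \ref{lem:res-1}) together with the monomial form of $f\circ\pi_f$ into the parameterization $\Gm(t)=(\pm t,t^p\theta(t))$, and read off the normalized order. By Lemma \ref{lem:res-1}, local coordinates $(u,v)$ at $\ula$ adapted to both $E_f$ and $\cE_f$ can be chosen so that at a regular point of $E_f$ with Hsiang \& Pati data $(l_1,m_1)$ one has
\[
\pi_f(u,v)=(\alpha\,u^{l_1},\,u^{l_1}\phi(u)+\beta\,u^{m_1}v),
\]
and at a corner of $E_f$ with Hsiang \& Pati data $(l_1,m_1),(l_2,m_2)$ (non-collinear, with $l_1\le m_1$ and $l_2\le m_2$),
\[
\pi_f(u,v)=(\alpha\,u^{l_1}v^{l_2},\,\phi(u^{l_1}v^{l_2})+\beta\,u^{m_1}v^{m_2}).
\]
Because $I_f:=\pi_f^*(f)$ is principal and monomial in $\cE_f$, the same coordinates yield
\[
(f\circ\pi_f)(u,v)=u^{r_1}v^{r_2}\,U(u,v)
\]
with $U$ a unit at $\ula$, where $r_1$ is the local resolution exponent along $\{u=0\}\subset E_f$ and $r_2\geq 0$ the exponent along the possible second branch of $\cE_f$ through $\ula$ (either another component of $E_f$ or a branch of the strict transform $F$). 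I would adopt the convention $l_2=m_2=r_2=0$ in the regular non-$F$ case to treat both situations uniformly.

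Next I would substitute $\Gm(t)=(\pm t,t^p\theta(t))$ into these expressions. The first coordinate of $\pi_f\circ\Gm$ equals $\alpha\,(\pm)^{l_1}\theta(t)^{l_2}\,t^{l_1+pl_2}$, of exact order $l_1+pl_2$ with nonzero leading coefficient $\alpha\,(\pm)^{l_1}\theta(0)^{l_2}$. The second coordinate has order at least $l_1+pl_2$: in the corner case this uses $\phi(s)=s\,h(s^{1/e})$ so $\phi(u^{l_1}v^{l_2})$ contributes order $\geq l_1+pl_2$ in $t$; in both cases the remaining monomial $u^{m_1}v^{m_2}$ contributes order $m_1+pm_2\geq l_1+pl_2$, strictly so whenever $(l_1,l_2)\neq(m_1,m_2)$. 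Consequently $|\pi_f\circ\Gm(t)|=c\,t^{l_1+pl_2}(1+o(1))$ with $c\geq|\alpha|>0$, which inverts to $t=(s/c)^{1/(l_1+pl_2)}(1+o(1))$, where $s$ denotes the norm parameter of $\gm$. Simultaneously,
\[
(f\circ\pi_f\circ\Gm)(t)=(\pm t)^{r_1}(t^p\theta(t))^{r_2}\,U(\pm t,t^p\theta(t))=C\,t^{r_1+pr_2}(1+o(1))
\]
with $C\neq 0$, using $U(\ula)\neq 0$ and $f\circ\gm\not\equiv 0$. Composing the two expansions yields
\[
\nu_f(\gm)=\frac{r_1+p\,r_2}{l_1+p\,l_2},
\]
which is exactly the affine-over-affine form claimed in the lemma.

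The main technical point is confirming that the first coordinate of $\pi_f\circ\Gm$ truly controls the Euclidean modulus, i.e., that no lower-order contribution from the second coordinate arises. This reduces to the inequalities $l_1\leq m_1$ and $l_2\leq m_2$, together with the non-collinearity of $(l_1,l_2)$ and $(m_1,m_2)$ at corner points, both supplied by Lemma \ref{lem:res-1}; once these are in hand the rest is routine Puiseux-series bookkeeping, and the coincidence of adapted coordinates for $E_f$ and $\cE_f$ makes the unification of the various sub-cases (regular versus corner, $\ula\in F$ or not) transparent.
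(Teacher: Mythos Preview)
Your proposal is correct and follows essentially the same approach as the paper: substitute the parameterization $t\mapsto(\pm t,t^p\theta(t))$ into the local monomial generators of $I_{\pi_f,\ula}$ and $I_{f,\ula}$ and read off the ratio of orders. The paper splits explicitly into the three sub-cases (regular point of $\cE_f$, corner of $\cE_f$ but regular in $E_f$, corner of $E_f$) and writes the answers $\frac{r}{l}$, $\frac{r+sp}{l}$, $\frac{r+sp}{l+mp}$ separately, whereas you unify them via the convention $l_2=r_2=0$; your extra paragraph verifying that the first Hsiang~\&~Pati coordinate controls $|\pi_f\circ\Gm|$ is more than the paper states, but it is sound and the conclusion is identical.
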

\begin{proof}
We find below in all possible situations the explicit expression of $\nu_f(\gm)$.
Let $\gm_f:t \to (\pm t, t^p \theta(u))$  be a Puiseux parameterization of $(\Gm,\ula)$.

\medskip\noindent
$\bullet$ \em Assume that $\{\ula\} = \Gm\cap E_f$ is a regular point of $E_f$ with 
$\ula \notin \clos(\cE_f\setminus E_f)$. Since $I_{\pi_f,\ula} = (u^l)$ and $I_{f,\ula} = (u^r)$,
the normalized order of $f$ along $\gm$ is  
\begin{center}
$\nu_f(\gm) = \frac{r}{l}$
\end{center}
\em 
In this case, we find that 
\begin{center}
$f\circ \gm_f(t) = t^r A(t)$
\end{center} 
for an invertible Puiseux series $A$. 
Since $|\pi_f\circ\gm_f(t)| = |t|^l B(t)$ for a positive Puiseux series $B$, we get the results.

\medskip\noindent
$\bullet$ \em Assume that $\ula$ is a corner point of $\cE_f$ but a regular point of $E_f$. 
Thus $I_{\pi_f,\ula} = (u^l)$ and $I_{f,\ula} = (u^rv^s)$. 
The normalized order of $f$ along $\gm$ is  
\begin{center}
$\nu_f(\gm) = \frac{r+sp}{l}$.
\end{center}
\em 
We find
\begin{center}
$f\circ \gm_f(t) = t^{r+ps} A(t)$ and $|\pi_f\circ\gm_f(u)| = |t|^l B(t)$ 
\end{center}
for invertible Puiseux series $A,B$. So we deduce the
normalized order.  

\medskip\noindent
$\bullet$ \em Assume that $\{\ula\} = \Gm\cap E_f$ is a corner point of $E_f$,
so that $I_{\pi_f,\ula} = (u^lv^m)$ and 
$I_{f,\ula} = (u^rv^s)$. 
The normalized order of $f$ along $\gm$ is  
\begin{center}
$\nu_f(\gm) = \frac{r+sp}{l+mp}  = \frac{rq+s}{lq+m}$.
\end{center}
\em
This last situation yields
\begin{center}
$f\circ \gm_f(t) = t^{r+ps} A(t)$ and $|\pi_f\circ\gm_f(t)| = |t|^{l+pm} B(t)$
\end{center}
for invertible Puiseux series $A,B$. 
The case of a parameterization of the form $\tau\to (\tau^q\mu (\tau),\pm \tau)$ 
works similarly.
\end{proof}
A key to our proof consequence of Lemma \ref{lem:order-arc} is the following 
\begin{proposition}\label{prop:main}
Let $\gm$ be an arc and let $h_*\gm$ be the arc corresponding to the image $h\circ\gm$.
We find 
\begin{equation}\label{eq:main-1}
\nu_f(\gm) = \nu_g (h_*\gm)
\end{equation}
\end{proposition}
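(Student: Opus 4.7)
The plan is to invoke Lemma \ref{lem:order-arc} on both sides and check that every ingredient of the two resulting formulas is transported from $\ula := \Gm \cap E_f$ to $\Phi(\ula) = \Phi(\Gm) \cap E_g$ by the analytic isomorphism $\Phi$, where $\Gm$ denotes the strict transform of $\gm$ by $\pi_f$. The identity $f\circ\pi_f = g\circ\pi_g\circ\Phi$ immediately ensures that $\Phi(\Gm)$ is the strict transform of $h\circ\gm$ by $\pi_g$, so the comparison of the two normalized orders reduces to a comparison of local data at the two matched points.

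Three ingredients enter the formulas of Lemma \ref{lem:order-arc}: the Hsiang \& Pati local data of $\pi_f$ at $\ula$, the local resolution data of $f$ at $\ula$, and the Puiseux exponent $p$ of $\Gm$ in adapted local coordinates at $\ula$. Corollary \ref{cor:HP} yields $\Phi^*I_{\pi_g}=I_{\pi_f}$ and $\Phi^*J_{\pi_g}=J_{\pi_f}$, which equates the Hsiang \& Pati local data at the two points. The same identity $f\circ\pi_f=g\circ\pi_g\circ\Phi$ also gives $\Phi^*\pi_g^*((g))=\pi_f^*((f))$; since both ideals are principal and monomial in $\cE_f$ and $\cE_g$ respectively, their monomial exponents must coincide, which delivers the equality of local resolution data.

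The main subtlety is the invariance of $p$. Since $\Phi(\cE_f)=\cE_g$ and $\Phi(E_f)=E_g$, the isomorphism $\Phi$ maps each local component of $\cE_f$ through $\ula$ onto a local component of $\cE_g$ through $\Phi(\ula)$. In Hsiang \& Pati coordinates adapted to $\cE_f$ and $\cE_g$, the map $\Phi$ therefore takes the form $(u,v)\mapsto(uA(u,v),vB(u,v))$ at a corner point (possibly up to a swap of the two coordinates), with $A(\oo), B(\oo)\neq 0$, and an analogous form at regular points where at least one axis is a component of $\cE_f$. Substituting the parameterization $t\mapsto(\pm t,\, t^{p}\theta(t))$ of $\Gm$ and reparameterizing by the new first coordinate yields a Puiseux expansion of the form $(\pm s,\, s^{p}\theta'(s))$ for $\Phi(\Gm)$, preserving the exponent $p$. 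At a regular point of $E_f$ lying outside the strict transform of $F$, the formula of Lemma \ref{lem:order-arc} does not involve $p$ at all, so the issue does not arise there.

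Plugging these three matched quantities into the explicit expressions of Lemma \ref{lem:order-arc} produces $\nu_f(\gm) = \nu_g(h_*\gm)$. The chief obstacle I anticipate is handling the possible coordinate swap at a corner point, which has to be absorbed by the symmetry of the formula $(r+sp)/(l+mp)=(rq+s)/(lq+m)$ under $p\leftrightarrow q = 1/p$ together with the corresponding swap in the roles of $(l,m)$ and of $(r,s)$; once this bookkeeping is made explicit, the three matching statements combine to give the desired identity.
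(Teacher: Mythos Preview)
Your proposal is correct and follows essentially the same route as the paper: invoke Lemma~\ref{lem:order-arc} on both sides and observe that all the integers entering the formula (Hsiang \& Pati local data via Corollary~\ref{cor:HP}, local resolution data via $\Phi^*I_g=I_f$, and the Puiseux exponent~$p$) are carried over by the analytic isomorphism~$\Phi$. The paper's own proof is terser---it simply asserts that ``all the integers \ldots\ are inherited \ldots\ via the analytic isomorphism~$\Phi$''---whereas you spell out the invariance of~$p$ through the local form of~$\Phi$ in adapted coordinates and handle the possible coordinate swap; the paper also explicitly disposes of the trivial case where $f$ vanishes identically along~$\gm$, which you leave implicit.
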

\begin{proof}
The function $f$ vanishes identically along $\gm$ if and only if $g$ vanishes identically along $h_*\gm$.

\smallskip
Assume that $\gm$ is not contained in $f^{-1}(0)$, thus $h_*\gm$ is not contained in
$g^{-1}(0)$ either.

Following Section \ref{section:HP}, such as Corollary \ref{cor:HP}, 
all the integers linked to $\pi_f$, $f$ and $\Gm$ used to compute the normalized order of $f$ along the arc $\gm$  
in Lemma \ref{lem:order-arc} are inherited by $\pi_g$, $g$ and $\Phi(\Gm)$ via 
the analytic isomorphism $\Phi$.
\end{proof}
%
%
Combining the fact that, any blow analytic image of a H\"older triangle 
is a H\"older triangle of equal exponent, with Proposition \ref{prop:main} the proof of our main 
result is, in principle,
already "done". 
We are now ready to go into the
\begin{proof}[Proof of Proposition \ref{prop:BA-pizza}]
From Lemma \ref{lem:order-arc} and Proposition \ref{prop:main} we deduce that the equivalent 
criterion of 
sub-analytic bi-Lipschitz contact equivalence presented as Proposition \ref{prop:criterion}
is satisfied, concluding the proof.
\end{proof}
%
%
%
%
%
%
%
%
%
%
%
%
%
%
%
%
%
%
%
%
%
%
%
%
%
%
%
%
%
%
%
%
%
%
\section{Appendix: a sketch of the existence of Pizzas}\label{section:app}

We present here a sketch of the existence of pizza (in the case of a plane real analytic 
function germ).

The paper \cite{BFGG} uses a general Preparation Theorem for the functions considered there (more general
than continuous sub-analytic plane function germs), which, roughly speaking, states that the function behaves
like a monomial in some H\"older-like triangle, so that we can find finitely many of them to cover a neighborhood
of the considered point. 
In the case of plane real analytic function germs, and in relation with blow-analytic equivalence
we are going to use here the resolution of singularities of the given function in order to provide explicit,
in the combinatorial data (function and resolution mapping), expressions for the width.  

\medskip
Let $f\in \cO_2$ be given and let $\pi : (M_f,\cE_f,E_f) \to (E,f^{-1}(0),\oo)$ be the (minimal) resolution
of $f$.

Let $\ula$ be a point of $E_f$ and let $(u,v)$ local coordinates at $\ula$ adapted to 
$E_f$ and $\cE_f$ so that 
\begin{center}
$\{u=0\} \subset (E_f,\ula) \subset (\cE_f,\ula) \subset \{u\cdot v = 0\}$.  
\end{center}
We also know that 
\begin{center}
$I_{\pi,\ula}  = (u^lv^m)$ and $J_{\pi,\ula}  = (u^\bl v^\bm)$ and $I_{f,\ula}  = (u^rv^s)$.
\end{center} 
With certainty we know that $\bl \geq 2l -1 \geq 1$, and $\bm\geq 2m -1 \geq 1$  and $r \geq m_f \cdot l$. 
We can assume that we are working in a (semi-analytic) box $B_\ula := ]-u_\ula,u_\ula[ \times ]-v_\ula,v_\ula[$
for positive real number $u_\ula$ and $v_\ula$.

The image $\pi(\clos(B_\ula))$ is a finite union of (one, two, or four) H\"older triangle(s) 
(with equal exponent) which can be calculated by the Hsiang \& Pati local data 
of $\pi$ at $\ula$.

We can check that $f$ is monotonic on each such H\"older triangle, and with computations 
similar (but longer) to those done in the proofs of Proposition \ref{prop:BAmetric} and of 
Lemma \ref{lem:order-arc} we can explicitly find the expression of the (non directed) 
width $\omg_f^T$ in terms of the exponents $l,m,\bl,\bm,r,s$.

Since $E_f$ is compact we can cover  it with finitely many closure of (semi-analytic) boxes 
of the type $B_\ula$ above in such a way the intersection of the closures of two
such boxes has always empty interior. 
This partition provides the triangulation of a neighborhood of the origin by finitely 
many H\"older triangles (the images by the resolution mapping $\pi$ of the closures of all the boxes) thus and 
pizza adding the (directed) width and the signs of the function within the interior 
of these \"older triangles. 

\bigskip
Let us make a final comment:
\begin{remark}
Either way (Preparation Theorem or minimal resolution) picked to cook-up a pizza of a plane real 
analytic function germ,  the combinatorial data to handle to describe the width functions
may be huge. Thus, when the function has a simpler combinatorial data (such as a non-degenerate Newton diagram) 
we are hopeful to describe, by means of this simpler combinatorial data, pizzas attached to $f$ \cite{BFGafG}. 
\end{remark}
%
%
%
%
%
%
%
%
%
%
%
%
%
%
%
%
%
%
%
%
%
%
%
%
%
%
%
%
%
%
%
%
%
%
%
%
%
%
%
%
%
%
%
%
%
%
%
%
%
%
%
%
%
%
%
%

%
%
%
%
%
%
%
%
%
%
%
%
%
%
%
%
%
%
%
%
%
%
%
%
%
%
%
%
%
%
%
%
%
%
%
%
%
%
%
%
%
%
%
%
%
%
%
%
%
%
%
%
%
%
%


\begin{thebibliography}{referen} \label{biblio}
%
\bibitem{Bir} \sc L. Birbrair,
\em Local bi-Lipschitz classification of 2-dimensional semi-algebraic sets,
\rm Houston J. Math. {\bf 25}  no. 3 (1999), 453--472.
%
\bibitem{BBGM} \sc A. Belotto \& E. Bierstone \& V. Grandjean \& P. Milman,
\em Hsiang \& Pati Coordinate,
\rm Preprint 2015, available at {\tt http://arxiv.org/abs/1504.07280}
%
\bibitem{BCFR} \sc L. Birbrair \& J.-C. Costa \& A. Fernandes \& M.A. Ruas,
\em $\cK$-bi-Lipschitz equivalence of real function-germs,
\rm Proc. Amer. Math. Soc. {\bf 135} no. 4 (2007), 1089--1095.
%
\bibitem{BFGG} \sc L. Birbrair \& A. Fernandes \& A. Gabrielov \& V. Grandjean,
\em Lipschitz contact equivalence of function germs in $\R^2$,
\rm to appear in Ann. Scu. Norm. Sup. - Preprint 2014, available at {\tt http://arxiv.org/abs/1406.2559}
%
\bibitem{BFGafG} \sc L. Birbrair \& A. Fernandes \& T. Gaffney \& V. Grandjean,
\rm In Preparation.
%
\bibitem{BM1} \sc E. Bierstone \& P. Milman,
\em Semi-analytic and sub-analytic sets, 
\rm Inst. Hautes \'Etudes Sci. Publ. Math. {\bf 67} (1988), pp. 5--42. 
%
\bibitem{BM2} \sc E. Bierstone \& P. Milman,
\em Canonical desingularization in characteristic zero by blowing
up the maximum strata of a local invariant, 
\rm Invent. Math., {\bf 128} (2) (1997), 207-302.
%
\bibitem{Fuk} \sc T. Fukuda, 
\em Types topologiques des polyn\^omes,
\rm IH\'ES. Publ. Math. no. {\bf 46} (1976), 87--106.
%
%
\bibitem{Gra1} \sc V. Grandjean,
\em Monomialization of singular metrics on real surfaces,
\rm Preprint 2015, available at {\tt http://arxiv.org/abs/1505.05167}
%
\bibitem{Gri1} \sc D. Grieser,
\em Hsiang \& Pati coordinates for real analytic isolated surface singularities, 
\rm Personal notes (2000) 4 pages.
%
\bibitem{HePa} \sc J.-P. Henry \& A. Parusi\'nski,
\em Existence of moduli for bi-Lipschitz equivalence of analytic functions,
\rm Compositio Math. 136 (2003), no. 2, 217--235.
%
\bibitem{Hir} \sc H. Hironaka,
\em Resolution of singularities of an algebraic variety over a field of characteristic
zero, 
\rm Ann. of Math., {\bf 79} (1964), 109-326.
%
\bibitem{HP} \sc W.C. Hsiang \& V. Pati,
\em $L^2$-cohomology of normal algebraic surfaces,
\rm Invent. Math., {\bf 81} (1985), 395-412.
%
\bibitem{Ku1} \sc T. Z. Kuo,
\em On classification of real singularities, 
\rm Invent. math. {\bf 82} (1985), 257--262.
%
\bibitem{KoPa1} \sc S. Koike \& A. Parusi\'nski,
\em Blow-analytic equivalence of two variables
real analytic function germs,
\rm J. Algebraic Geometry {\bf 19} (2010), 439--472. 
%
\bibitem{KoPa2} \sc S. Koike \& A. Parusi\'nski,
\em Equivalence relations for two variable real analytic function germs, 
\rm  J. Mathematical Society Japan {\bf 65} no. 1 (2013), 237--276.
%
\bibitem{Mat} \sc J. Mather,
\em Stability of $C^\infty$ mappings III: Finitely determined map-germs, 
\rm IH\'ES Publ. Math., {\bf 35} (1968), 279--308.
%
\bibitem{RV} \sc M.A. Ruas \& G. Valette,
\em $C^0$ and bi-Lipschitz K-equivalence of mappings,
\rm  Math. Z. 269 (2011), no. 1-2, 293--308. 
%
\bibitem {VdD} \sc L. van den Dries,
\em Tame Topology  and o-minimal Structures, 
\rm BAMS Volume 37, no. 3,  351--357. 

\end{thebibliography}
\end{document}